\documentclass[11pt]{article}

\usepackage[a4paper,margin=1in]{geometry}
\usepackage[T1]{fontenc}
\usepackage[utf8]{inputenc}
\usepackage{lmodern}
\usepackage{amsmath,amssymb,amsthm,mathtools}
\usepackage{enumitem}
\usepackage[colorlinks=true,linkcolor=blue,citecolor=blue,urlcolor=blue]{hyperref}
\usepackage{cleveref}

\newcommand{\R}{\mathbb{R}}
\newcommand{\N}{\mathbb{N}}

\newcommand{\HH}{\mathbb{H}}

\newcommand{\E}{\mathbb{E}}

\newcommand{\F}{\mathcal{F}}

\newcommand{\eps}{\varepsilon}
\newcommand{\dd}{\,\mathrm{d}}
\newcommand{\D}{\mathbb{D}}

\newcommand{\graph}{\operatorname{graph}}

\DeclareMathOperator{\conv}{conv}

\DeclareMathOperator{\dist}{dist}

\newtheorem{theorem}{Theorem}
\newtheorem{proposition}{Proposition}
\newtheorem{lemma}{Lemma}
\newtheorem{corollary}{Corollary}

\theoremstyle{definition}
\newtheorem{definition}{Definition}
\newtheorem{assumption}{Assumption}

\theoremstyle{remark}
\newtheorem{remark}{Remark}

\crefname{theorem}{theorem}{theorems}
\Crefname{theorem}{Theorem}{Theorems}
\crefname{proposition}{proposition}{propositions}
\Crefname{proposition}{Proposition}{Propositions}
\crefname{lemma}{lemma}{lemmas}
\Crefname{lemma}{Lemma}{Lemmas}
\crefname{corollary}{corollary}{corollaries}
\Crefname{corollary}{Corollary}{Corollaries}
\crefname{definition}{definition}{definitions}
\Crefname{definition}{Definition}{Definitions}
\crefname{assumption}{assumption}{assumptions}
\Crefname{assumption}{Assumption}{Assumptions}
\crefname{example}{example}{examples}
\Crefname{example}{Example}{Examples}
\crefname{remark}{remark}{remarks}
\Crefname{remark}{Remark}{Remarks}

\title{Graphical stability of set-valued integrals under measure perturbations}
\author{Tam Le\ \thanks{Universit\'{e} Paris Cit\'e, LPSM}}
\date{July 20, 2026}

\begin{document}

\maketitle

\begin{abstract}
Motivated by sampling and approximation schemes arising in nonsmooth optimization, we study the stability of parameterized set-valued integrals under weak perturbations of the underlying probability distribution. For a compact parameter set, we show that compact convex-valued and jointly outer semicontinuous integrands induce set-valued integral maps that converge graphically in excess distance along any weakly convergent sequence of probability measures. The result holds under a superlinear integrability condition and provides a unified stability principle for measure approximations of set-valued expectations. We discuss the sharpness of the assumptions through examples. In particular, we emphasize that joint outer semicontinuity is required in general and that the superlinear envelope condition is tight relative to the classical i.i.d. empirical setting. As a consequence, we obtain outer stability of solution sets for stochastic generalized equations. We illustrate the stability result in several settings, including stochastic nonsmooth optimization with Markovian sampling, smoothing by mollifiers, and parameter-dependent distributional dynamics.
\end{abstract}

\medskip
\noindent\textbf{2020 Mathematics Subject Classification.}
Primary 49J53; Secondary 49J52, 28B20, 60B10, 90C31, 90C15.

\smallskip
\noindent\textbf{Keywords.} Aumann integral; multifunctions; graphical convergence; outer semicontinuity; nonsmooth stochastic
optimization; uniform integrability.

\section{Introduction}
Integrals and expectations of set-valued maps
\cite{Aumann1965,Debreu1967} are standard objects in nonsmooth stochastic
optimization and variational analysis
\cite{CastaingValadier1977,HiaiUmegaki1977,AubinFrankowska1990,Molchanov2005,RockafellarWets1998,benaim2005stochastic}.
They provide a natural language for averaging subdifferentials, random feasible
sets, and choices among possible responses.

In applications, the underlying probability law is often replaced by an
approximation or by a tractable surrogate. This leads to empirical
measures, occupation measures generated by Markovian sampling schemes
\cite{RamNedicVeeravalli2009,XieLuZhuWu2016,DuMordatch2019,Even2023}, and smoothing kernels used in nonsmooth
optimization and zeroth-order methods
\cite{ErmolievNorkinWets1995,Chen2012Smoothing,DuchiBartlettWainwright2012,NesterovSpokoiny2017}.
Related perturbations also appear in robust optimization \cite{ben2009robust}
and in models with decision-dependent data distributions
\cite{PerdomoZrnicMendlerDunnerHardt2020,EnnajiFadiliAttouch2024}.

The stability of such set-valued objects under perturbations of the underlying measure has therefore been studied in several settings. In particular, for i.i.d. empirical measures, graphical convergence and set-valued laws of large numbers are available in several forms: laws of large numbers for random sets and random
semicontinuous mappings \cite{ArtsteinVitale1975,NorkinWets2013}, uniform laws
for parameterized set-valued maps and subdifferentials \cite{ShapiroXu2007},
and strong laws for random monotone operators \cite{Salim2023}. The subdifferential case has been sharpened in a parallel line of work
\cite{DavisDrusvyatskiy2022,Ruan2025,TianRoyset2026} and related stability questions also arise for stochastic generalized equations, see e.g. \cite{LiuRomischXu2014}.

\bigskip

This paper studies the stability question under weak perturbations of the
integrating measure. Let \(S\) be a Polish space, let \(\Theta\subset\R^p\) be a
parameter space, and let
\[
G:\Theta\times S\rightrightarrows\R^d
\]
be a parameterized set-valued map. Given probability measures
\((\mu_k)_{k\in\N}\) and $\mu$ on \(S\) where \(\mu_k\) weakly converges to \(\mu\), we consider the set-valued integrals
\[
H_k(\theta):=\int_S G(\theta,s)\dd\mu_k(s)
\qquad \text{and} \qquad
H(\theta):=\int_S G(\theta,s)\dd\mu(s).
\]
We establish the convergence of the induced graphs
\[
\graph_\Theta H_k
:=
\{(\theta,x)\in\Theta\times\R^d:x\in H_k(\theta)\}
\xrightarrow[k \to \infty]{}
\graph_\Theta H .
\]
The convergence is measured by the \emph{excess distance} between the graphs, seen as compact subsets of \(\Theta\times\R^d\). This can be seen as a metric form of one-sided Painlev\'e--Kuratowski convergence
\cite[Chapter 4.B]{RockafellarWets1998} where every limit point of sequences
\((\theta_k,x_k)\in\graph_\Theta H_k\) belongs to \(\graph_\Theta H\). This
notion is natural in variational problems, where graphical convergence is used
to pass to limits in stationary sets \cite{NorkinWets2013} and in approximate
dynamics \cite{benaim2012perturbations}.

\bigskip

The result gives a unified treatment of several aspects that arise in such
approximations. It extends parameter-dependent graphical laws of large numbers
beyond the i.i.d. empirical setting \cite{ShapiroXu2007,NorkinWets2013} by allowing general weak perturbations of
the underlying probability law. It also allows unbounded integrands, a case
identified in \cite{artstein1988approximating} as a source of convergence
failure. In the present framework, this obstruction is controlled through a uniform integrability condition. The resulting stability principle covers
moving parameters, possibly unbounded values, and several approximation schemes relevant
to nonsmooth and stochastic optimization.

\paragraph{Organization of the paper} We recall several definitions and elementary results on uniform integrability and set-valued maps in \Cref{sec:prelims}. Our main result, \Cref{th:main_theorem}, is established in the subsequent \Cref{sec:main_result} along with consequences for ergodic sampling schemes, \Cref{cor:ergodic_sampling}, solutions of stochastic inclusions, \Cref{cor:stochastic_ge} and we provide a discussion on sharpness of our assumptions, see \Cref{rem:assumptions}. Finally, we illustrate our results with several applications in \Cref{sec:applications}.

\paragraph{Notation}
Throughout this work, \(S\) is a Polish space equipped with its Borel $\sigma$-algebra. Let
\(\mathbb{S}^{d-1}:=\{u\in\R^d:\|u\|=1\}\) and
\((r)_+:=\max\{r,0\}\). For nonempty compact sets
\(A,B\subset\R^d\), we define the excess distance $\D$ and the Hausdorff distance $\mathbb{H}$ by
\[
\D(A,B):=\sup_{a\in A}\dist(a,B),
\qquad
\HH(A,B):=\max \left\{\D(A,B),\D(B,A) \right\}.
\]
For \(\Theta\subset\R^p\), the graph of a set-valued map \(H\) restricted to
\(\Theta\) is denoted by
\[
\graph_\Theta H:=\{(\theta,x)\in\Theta\times\R^d:x\in H(\theta)\}.
\]
We write \(\mu_k\Rightarrow\mu\) to denote weak convergence of probability
measures, and \(\delta_x\) for the Dirac mass at \(x\).

\section{Preliminaries}
\label{sec:prelims}

\subsection{On uniform integrability}

We recall classical results on uniform integrability of a family of probability distributions.

\begin{lemma}[de la Vall\'{e}e-Poussin {\cite[Th.~22, p.~19]{dellacherie2011probabilities}}] \label{lem:de_la_vallee} Let $(\mu_k)_{k \in \N}$ be a sequence of probability measures. Let $X : S \to \R$ be measurable. Then the following are equivalent:
	\begin{enumerate}
		\item There exists a convex function $\Phi$ with $\Phi(t) /t \xrightarrow[t \to \infty]{} \infty$ such that \[\sup_{k \in \N} \int_S \Phi ( |X|) \dd \mu_k< \infty.\]
		\item \(X\) is uniformly integrable with respect to the sequence
		\((\mu_k)_{k\in\N}\), namely
		\[
		\lim_{M \to \infty}\sup_{k \in \N}
		\int_S |X| \mathbf{1}_{\{|X| \geq M\}} \dd \mu_k = 0.
		\]
	\end{enumerate}
\end{lemma}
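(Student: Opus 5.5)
We prove the two implications separately. Throughout, write \(\nu_k\) for the law of \(|X|\) under \(\mu_k\), a probability measure on \([0,\infty)\), and set \(g(M):=\sup_{k}\int_S |X|\mathbf{1}_{\{|X|\ge M\}}\dd\mu_k\).

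\textbf{(1) \(\Rightarrow\) (2).} Let \(C:=\sup_k\int_S\Phi(|X|)\dd\mu_k<\infty\). Given \(\eps>0\), superlinearity gives \(R\) such that \(\Phi(t)\ge t/\eps\) for all \(t\ge R\). Hence, for \(M\ge R\), on \(\{|X|\ge M\}\) we have \(|X|\le \eps\,\Phi(|X|)\). Therefore
\[
\int_S |X|\mathbf{1}_{\{|X|\ge M\}}\dd\mu_k\le \eps\int_S\Phi(|X|)\dd\mu_k\le \eps C
\]
uniformly in \(k\). Here we may assume \(\Phi\ge0\) on \([0,\infty)\): convexity and superlinearity make \(\Phi\) eventually positive, and replacing \(\Phi\) by \(\Phi_+\) preserves both properties. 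This gives (2).

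\textbf{(2) \(\Rightarrow\) (1).} This is the substantive direction, and we follow the classical construction. By (2), choose integers \(0<M_1<M_2<\cdots\) with \(g(M_n)\le 2^{-n}\). Define the nondecreasing step function
\[
\varphi(t):=\sum_{n\ge1}\mathbf{1}_{\{t\ge M_n\}},
\]
which is the number of indices \(n\) with \(M_n\le t\). It tends to \(\infty\), so \(\Phi(t):=\int_0^t\varphi(s)\dd s\) is convex, nonnegative, and satisfies \(\Phi(t)/t\to\infty\). Since \(\Phi(t)\le t\varphi(t)\), we get
\[
\int_S\Phi(|X|)\dd\mu_k\le\int_S|X|\varphi(|X|)\dd\mu_k=\sum_n\int_S|X|\mathbf{1}_{\{|X|\ge M_n\}}\dd\mu_k\le\sum_n 2^{-n}=1,
\]
where the exchange of sum and integral is justified by Tonelli's theorem. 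This bound holds uniformly in \(k\), which gives (1).

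\textbf{Main obstacle.} The only delicate step is arranging \(\varphi\) so that \(\Phi\) is simultaneously convex and superlinear while its integral is controlled by the tails. Taking \(\Phi\) as the integral of a nondecreasing unbounded counting function handles both requirements at once, and the summability \(\sum 2^{-n}<\infty\) comes directly from choosing the thresholds \(M_n\) via (2). Two further points need to be verified. Each \(M_n\) can indeed be chosen finite, which is exactly what (2) guarantees. In (2), we read the limit as a limit along \(M\), which requires \(g\) to be monotone; this holds because \(g\) is nonincreasing in \(M\).
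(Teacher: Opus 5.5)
Your proof is correct and is the classical de la Vall\'ee-Poussin argument; the paper gives no proof of its own and only cites Dellacherie--Meyer, whose argument is the same as yours: thresholds $M_n$ with $g(M_n)\le 2^{-n}$ and $\Phi(t)=\int_0^t\varphi=\sum_n (t-M_n)_+$, whose integrals are dominated by the summable tails. The one step you leave implicit is that passing to $\Phi_+$ keeps $\sup_k\int_S\Phi_+(|X|)\dd\mu_k$ finite, which holds because a finite convex superlinear $\Phi$ is bounded below on $[0,\infty)$; separately, your closing remark on monotonicity is unnecessary, since the limit in (2) makes sense whether or not $g$ is monotone.
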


A direct consequence when the sequence is reduced to a singleton is as follows.

\begin{lemma}[Existence of a superlinear integrable function] \label{lem:de_la_vallee_consequence}
	Let \(X:S\to\R\) be measurable and assume
	\(\int_S |X|\dd\mu <\infty\).  Then there exists a convex function
	\(\Phi:\R_+\to\R_+\) with \(\Phi(t)/t\xrightarrow[t \to \infty]{}\infty\) such that $\int_S \Phi (|X|) \dd\mu<\infty .$
\end{lemma}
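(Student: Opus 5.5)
We derive the statement from \Cref{lem:de_la_vallee} by taking the sequence of measures to be constant, \(\mu_k:=\mu\) for every \(k\in\N\). With this choice the implication (2)\(\Rightarrow\)(1) of \Cref{lem:de_la_vallee} yields a convex \(\Phi\) with \(\Phi(t)/t\to\infty\) and \(\int_S\Phi(|X|)\dd\mu<\infty\). So it suffices to check hypothesis (2), then adjust \(\Phi\) to be \(\R_+\to\R_+\).

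\textbf{Uniform integrability.} For the constant sequence, condition (2) reads
\[
\lim_{M\to\infty}\int_S |X|\mathbf{1}_{\{|X|\ge M\}}\dd\mu=0 .
\]
Since \(|X|\in L^1(\mu)\), it is finite \(\mu\)-a.e. Hence \(|X|\mathbf{1}_{\{|X|\ge M\}}\to 0\) \(\mu\)-a.e. as \(M\to\infty\), with domination by \(|X|\). Dominated convergence then gives the limit \(0\).

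\textbf{Adjusting \(\Phi\).} Next we make sure \(\Phi\) takes values in \(\R_+\) and is defined on \(\R_+\). Replace \(\Phi\) by \(\tilde\Phi:=\Phi_+=\max\{\Phi,0\}\), restricted to \(\R_+\).
\begin{itemize}
\item It is convex, being a maximum of convex functions.
\item It is nonnegative.
\item It is still superlinear, since \(\Phi(t)>0\) for large \(t\).
\item It is integrable, since \(\tilde\Phi(|X|)\le |\Phi(|X|)|\).
\end{itemize}
For the last point we need \(\Phi(|X|)\in L^1\) rather than only finiteness of the integral. The negative part is controlled because a convex superlinear \(\Phi\) is bounded below by an affine function \(a+bt\). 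Its negative part is therefore bounded by \(|a|+|b|\,|X|\), which is integrable.

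\textbf{Main obstacle.} The only real content is the uniform integrability step, which dominated convergence handles. Alternatively, one can build \(\Phi\) directly and avoid citing the lemma. Choose \(M_n\uparrow\infty\) with
\[
\int_S|X|\mathbf{1}_{\{|X|\ge M_n\}}\dd\mu\le 2^{-n},
\]
and set \(\Phi(t)=\sum_n (t-M_n)_+\). This is a sum of convex nonnegative functions, and it is finite because only finitely many terms are nonzero for each \(t\). It is superlinear since its slope exceeds \(n\) beyond \(M_n\). Finally,
\[
\int_S\Phi(|X|)\dd\mu\le\sum_n 2^{-n}<\infty .
\]
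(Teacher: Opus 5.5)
Your proposal is correct and follows the paper's route: the paper obtains this lemma directly from \Cref{lem:de_la_vallee} by taking the constant sequence $\mu_k=\mu$. Your dominated-convergence check of uniform integrability and your replacement of $\Phi$ by $\max\{\Phi,0\}$ fill in details the paper leaves implicit, and your alternative explicit construction $\Phi(t)=\sum_n (t-M_n)_+$ is also valid and makes the argument self-contained.
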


\subsection{Set-valued analysis} In this part, we gather essential materials on set-valued maps.

\begin{definition}
	Let \((S,\F,\mu)\) be a probability space.  A map
	\(G:S\rightrightarrows \R^d\) admits a measurable selection if there exists an
	\(\F\)-measurable \(g:S\to\R^d\) such that \(g(s)\in G(s)\) for \(\mu\)-a.e.
	\(s\).  The Aumann integral of \(G\) with respect to \(\mu\) is
	\[
	\int_S G \dd\mu
	:=
	\left\{
	\int_S g \dd \mu:
	g \text{ is an integrable measurable selection of } G
	\right\}.
	\]
\end{definition}

\begin{definition}
	Let \(G:S\rightrightarrows\R^d\) have nonempty values.  We call
	\(G\) outer semicontinuous at \(s\in S\) if for every \(\eps>0\) there
	is a neighborhood \(U\) of \(s\) such that
	\[
	\D(G(s'),G(s))\leq \eps
	\qquad\forall s'\in U.
	\]
	It is outer semicontinuous if it is outer semicontinuous at every
	\(s\in S\).
\end{definition}
For $\Theta\subset\R^p$, and a set-valued map $H$, we recall that
\[
\graph_\Theta H:=\{(\theta,x)\in \Theta\times\R^q:x\in H(\theta)\}.
\]

\begin{lemma}[{adapted from \cite[Prop. 6.2]{aubin1987graphical}}]
	\label{lem:graph_convergence_pointwise_characterization}
	Let $\Theta\subset \mathbb{R}^p$ be nonempty and compact.  
	For each $k \in \N$, let
	$H_k:\mathbb{R}^p \rightrightarrows \mathbb{R}^q$ and
	$H:\mathbb{R}^p\rightrightarrows \mathbb{R}^q$ be outer semicontinuous,
	locally bounded, and nonempty compact valued on $\Theta$. Then the
	following are equivalent:
	\begin{enumerate}
		\item $\D(\graph_\Theta H_k,\ \graph_\Theta H) \xrightarrow[k \to \infty]{}0$;
		\item For any $\theta\in\Theta$ and any sequence
		$\theta_k\in\Theta$ with
		$\theta_k \xrightarrow[k \to \infty]{}\theta$,
		$\D\bigl(H_k(\theta_k),\,H(\theta)\bigr)
		\xrightarrow[k \to \infty]{} 0.$
	\end{enumerate}
\end{lemma}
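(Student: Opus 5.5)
We prove the two implications separately, working with the compact product set \(\Theta\times\R^q\) and the excess \(\D(\graph_\Theta H_k,\graph_\Theta H)=\sup_{(\theta,x)\in\graph_\Theta H_k}\dist((\theta,x),\graph_\Theta H)\). Here the distance is taken in \(\R^p\times\R^q\) with, say, the Euclidean product norm. Both graphs are compact: outer semicontinuity makes each graph closed relative to the compact set \(\Theta\), and local boundedness on the compact set \(\Theta\) makes it bounded. So all suprema are attained or at least finite.

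\emph{(1)\(\Rightarrow\)(2).} Let \(\theta_k\to\theta\) in \(\Theta\) and \(x_k\in H_k(\theta_k)\) (any point, since the values are compact). By (1) there is \((\theta_k',x_k')\in\graph_\Theta H\) with \(\|\theta_k-\theta_k'\|+\|x_k-x_k'\|\le \eps_k\to0\) uniformly in the choice of \(x_k\). Then \(\theta_k'\to\theta\). Outer semicontinuity of \(H\) at \(\theta\) gives, for \(k\) large, \(\D(H(\theta_k'),H(\theta))\le\eps\), hence \(\dist(x_k,H(\theta))\le \eps_k+\eps\). Taking the supremum over \(x_k\in H_k(\theta_k)\) yields \(\limsup_k\D(H_k(\theta_k),H(\theta))\le\eps\) for every \(\eps\), which is (2).

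\emph{(2)\(\Rightarrow\)(1).} Argue by contradiction. Suppose there are \(\eta>0\), a subsequence, and points \((\theta_k,x_k)\in\graph_\Theta H_k\) with \(\dist((\theta_k,x_k),\graph_\Theta H)\ge\eta\). By compactness of \(\Theta\), pass to a further subsequence with \(\theta_k\to\theta\in\Theta\). Condition (2) is stated along the full sequence, so we extend the subsequence \((\theta_k)\) to a full sequence converging to \(\theta\) by setting \(\theta_j=\theta\) at the missing indices. Condition (2) then gives \(\dist(x_k,H(\theta))\to0\), so there exist \(y_k\in H(\theta)\) with \(\|x_k-y_k\|\to0\). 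Since \((\theta,y_k)\in\graph_\Theta H\), we get \(\dist((\theta_k,x_k),\graph_\Theta H)\le\|\theta_k-\theta\|+\|x_k-y_k\|\to0\), a contradiction.

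The only delicate step is this subsequence-extension trick in (2)\(\Rightarrow\)(1), which is needed because (2) is phrased for full sequences. Compactness of \(\Theta\) is what lets us extract a convergent parameter subsequence. Local boundedness and outer semicontinuity of the \(H_k\) are not needed beyond ensuring that the excess distances are finite and well-defined between compact sets. Only outer semicontinuity of the limit map \(H\) is used, and only in the first implication.
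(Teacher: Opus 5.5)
Your proof is correct. The paper gives no proof of its own here: the lemma is quoted as an adaptation of Aubin's Prop.~6.2, which is stated for graphical (Painlev\'e--Kuratowski) outer limits, and the passage to the excess-distance form is left implicit. You instead work directly with the excess, using sequential compactness and never passing through set limits.

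This gives a clean accounting of the hypotheses. Outer semicontinuity of $H$ enters only in (1)$\Rightarrow$(2), and compactness of $\Theta$ is really needed only in (2)$\Rightarrow$(1). Moreover, because condition (2) is already metric, you need no bound on $\graph_\Theta H_k$ that is uniform in $k$. A Kuratowski-based route would need such a bound to extract convergent subsequences from the points $x_k$. For $H_k\equiv\{k\}$ and $H\equiv\{0\}$, the outer limit of $H_k(\theta_k)$ is empty, hence contained in $H(\theta)$, for every $\theta_k\to\theta$, while $\D(\graph_\Theta H_k,\graph_\Theta H)=k$. The lemma assumes only local boundedness of each $H_k$ separately, so this matters. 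What the Kuratowski route offers in return is the link the introduction alludes to: (1) and (2) are metric strengthenings of the outer-limit inclusions for graphs and for values, and are equivalent to them when the graphs are uniformly bounded.

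Three small things to tidy up:
\begin{itemize}
\item $\Theta\times\R^q$ is not compact; only the two graphs are, as you then correctly show.
\item You fix the Euclidean product norm but write $\ell^1$ bounds such as $\|\theta_k-\theta_k'\|+\|x_k-x_k'\|\le\eps_k$. This only costs taking $\eps_k:=\sqrt{2}\,\D(\graph_\Theta H_k,\graph_\Theta H)$, using that distances to the compact set $\graph_\Theta H$ are attained.
\item In (1)$\Rightarrow$(2), say explicitly that the index beyond which $\theta_k'$ lies in the outer-semicontinuity neighborhood of $\theta$ does not depend on the chosen $x_k\in H_k(\theta_k)$, since $\|\theta_k'-\theta\|\le\eps_k+\|\theta_k-\theta\|$. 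This uniformity is what lets you take the supremum over $x_k$.
\end{itemize}
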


\paragraph{Support functions}

We next recall how compact convex sets can be represented and compared through
their support functions. For a nonempty compact convex set \(K\subset\R^d\),
its support function \(\sigma_K:\R^d\to\R\) is defined by
\[
\sigma_K(u):=\sup_{x\in K}\langle u,x\rangle,
\qquad u\in\R^d.
\]

\begin{lemma}[{\cite[Lemma 17.30]{infinite}}] 
	Let $G : S \rightrightarrows \R^d$ be locally bounded and outer semicontinuous. Then for any $u \in \mathbb{S}^{d-1}$, $s \mapsto \sigma_{G(s)}(u)$ is upper semicontinuous.
\end{lemma}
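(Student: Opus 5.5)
Fix \(u\in\mathbb{S}^{d-1}\) and \(s\in S\). We want \(\limsup_{s'\to s}\sigma_{G(s')}(u)\le\sigma_{G(s)}(u)\). Since the topology of a Polish space is metrizable, upper semicontinuity reduces to a sequential statement. So fix a sequence \(s_n\to s\) and an arbitrary \(\eps>0\).

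\textbf{Step 1 (excess bound near \(s\)).} By outer semicontinuity of \(G\) at \(s\), as given in the definition above, there is a neighborhood \(U\) of \(s\) with \(\D(G(s'),G(s))\le\eps\) for all \(s'\in U\). Local boundedness is used only to guarantee that \(\sigma_{G(s')}(u)\) is finite on \(U\), shrinking \(U\) if necessary. Note also that \(G(s)\) is nonempty and bounded (local boundedness at \(s\)), so \(\sigma_{G(s)}(u)\) is finite.

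\textbf{Step 2 (comparison of support functions).} Take \(s'\in U\) and any \(x'\in G(s')\). By Step 1, \(\dist(x',G(s))\le\eps\), so there is \(x\in G(s)\) with \(\|x'-x\|\le 2\eps\); the factor \(2\) removes the need for closedness of \(G(s)\). Since \(\|u\|=1\),
\[
\langle u,x'\rangle\le\langle u,x\rangle+\|x'-x\|\le\sigma_{G(s)}(u)+2\eps .
\]
Taking the supremum over \(x'\in G(s')\) gives \(\sigma_{G(s')}(u)\le\sigma_{G(s)}(u)+2\eps\) for every \(s'\in U\).

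\textbf{Step 3 (conclusion).} For all large \(n\) we have \(s_n\in U\), so \(\limsup_n\sigma_{G(s_n)}(u)\le\sigma_{G(s)}(u)+2\eps\). Letting \(\eps\downarrow0\) gives the claim. This is simply the \(1\)-Lipschitz property of \(\langle u,\cdot\rangle\) transferred through the excess distance.

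\textbf{Expected obstacle.} There is essentially none, given the metric (excess) definition of outer semicontinuity used here. The only points needing care are that all support function values are finite, which local boundedness supplies, and that \(G(s)\) need not be closed, which the approximation with \(2\eps\) in Step 2 handles.
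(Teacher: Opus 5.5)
Your proof is correct, and it takes a different route from the paper, which gives no argument of its own and simply cites \cite[Lemma 17.30]{infinite}. That cited result is a general value-function lemma: if a correspondence is upper hemicontinuous with nonempty compact values and $f$ is upper semicontinuous on its graph, then $s\mapsto\max_{y\in G(s)}f(s,y)$ is upper semicontinuous. One would apply it with $f(s,y)=\langle u,y\rangle$, after first checking that $G$ is upper hemicontinuous with compact values.

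You bypass both that translation and the general lemma. The $1$-Lipschitz property of $\langle u,\cdot\rangle$ turns the excess bound directly into the estimate $\sigma_{G(s')}(u)\le\sigma_{G(s)}(u)+2\eps$ near $s$, with no need for closed or compact values. The citation buys generality: a general topological target space and jointly upper semicontinuous objectives $f(s,y)$, not just linear functionals. Your argument is self-contained and fits exactly the metric definition of outer semicontinuity the paper uses.

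Two small simplifications are available. First, the reduction to sequences via metrizability of $S$ is unnecessary, since the neighborhood estimate of Step 2 already is upper semicontinuity at $s$, in any topological space $S$. Second, finiteness of $\sigma_{G(s')}(u)$ on $U$ follows from that same estimate together with nonemptiness of $G(s')$. So local boundedness is used only to make $\sigma_{G(s)}(u)$ finite.

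That last point is specific to the excess definition. Under the graph-closedness notion of outer semicontinuity, local boundedness is genuinely needed. For example, on $S=\R$ take $G(0)=\{0\}$ and $G(s)=\{1/s\}$ for $s\neq0$, with $u=1$: the graph is closed, but the support function blows up as $s\downarrow0$.
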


\begin{lemma} \label{prop:support}
	Let \((S,\F,\mu)\) be a probability space, and let
	\(G:S\rightrightarrows\R^d\) be measurable, nonempty compact convex valued,
	and integrably bounded. Then
	\[
	\sigma_{\int_S G\dd\mu}(u)
	=
	\int_S \sigma_{G(s)}(u) \dd  \mu ( s) 
	\qquad \text{for all } u\in\R^d.
	\]
\end{lemma}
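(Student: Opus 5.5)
We prove the two inequalities between $\sigma_{\int_S G\dd\mu}(u)$ and $\int_S \sigma_{G(s)}(u)\dd\mu(s)$ separately, for a fixed $u\in\R^d$. First we make sure both sides are well defined and finite. Let $k\in L^1(\mu)$ be an integrable bound, so that $\|x\|\le k(s)$ for all $x\in G(s)$. Then $|\sigma_{G(s)}(u)|\le \|u\|k(s)$, so the right-hand side is finite once $s\mapsto\sigma_{G(s)}(u)$ is measurable. Measurability follows from the Castaing representation of the measurable closed-valued map $G$: there are measurable selections $(g_n)_n$ with $G(s)=\overline{\{g_n(s)\}}$, and therefore $\sigma_{G(s)}(u)=\sup_n\langle u,g_n(s)\rangle$ is a countable supremum of measurable functions. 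Every measurable selection is integrable because of the bound $k$, and at least one exists (Kuratowski--Ryll-Nardzewski), so $\int_S G\dd\mu$ is nonempty and contained in the ball of radius $\int_S k\dd\mu$.

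\emph{Upper bound.} Take any integrable selection $g$. Then $\langle u,g(s)\rangle\le\sigma_{G(s)}(u)$ for every $s$, and integrating gives
\[
\Bigl\langle u,\int_S g\dd\mu\Bigr\rangle\le\int_S\sigma_{G(s)}(u)\dd\mu(s).
\]
Taking the supremum over $g$ yields $\sigma_{\int_S G\dd\mu}(u)\le\int_S\sigma_{G(s)}(u)\dd\mu(s)$.

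\emph{Lower bound.} This is the main step, and it requires building a selection that attains the supremum pointwise. Since $G(s)$ is compact, the argmax set
\[
A(s):=\{x\in G(s):\langle u,x\rangle=\sigma_{G(s)}(u)\}
\]
is nonempty and compact. We need $A$ to be measurable so that it admits a measurable selection. We would argue this through its graph: it is the intersection of $\graph G$ with the zero set of the Carathéodory function $(s,x)\mapsto\langle u,x\rangle-\sigma_{G(s)}(u)$. That function is measurable in $s$ and continuous in $x$, so the intersection is measurable in the product $\sigma$-algebra. Alternatively, one can quote the standard result that argmax maps of Carathéodory functions over measurable compact-valued maps are measurable (see \cite{infinite}).

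If the probability space is not complete, we can sidestep this issue with an explicit construction. For $n\ge1$, let $g^{(n)}(s)$ be $g_m(s)$ for the first index $m$ with $\langle u,g_m(s)\rangle>\sigma_{G(s)}(u)-1/n$. Each $g^{(n)}$ is then a measurable selection, and dominated convergence (with bound $\|u\|k$) gives
\[
\int_S\langle u,g^{(n)}\rangle\dd\mu\to\int_S\sigma_{G(s)}(u)\dd\mu(s).
\]
This approximation already proves the lower bound, with no need for an exact argmax selection. We therefore use it as the primary route, since it relies only on the Castaing representation.

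Combining the two inequalities proves the identity. Convexity of the values of $G$ is not needed for this identity; it only serves later to identify $\int_S G\dd\mu$ (which is convex and, by Lyapunov/compactness, closed) with the set determined by its support function.
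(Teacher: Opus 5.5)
The paper gives no proof of this lemma. It is quoted as the classical support-function formula for Aumann integrals (in the spirit of Castaing--Valadier and Hiai--Umegaki), so your argument is a self-contained replacement for that citation, and as such it is correct and follows the standard route.

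The upper bound is the routine integration of $\langle u,g(s)\rangle\le\sigma_{G(s)}(u)$ over integrable selections. Your $1/n$-maximizing selections $g^{(n)}$, built from a Castaing representation, settle the lower bound. Their measurability rests on the measurability of $s\mapsto\sigma_{G(s)}(u)=\sup_n\langle u,g_n(s)\rangle$, because the sets on which a given index is the first admissible one are defined through this function; it is worth saying so explicitly.

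Two simplifications are available:
\begin{itemize}
\item Since $\mu$ is a probability measure and $\langle u,g^{(n)}(s)\rangle>\sigma_{G(s)}(u)-1/n$ for every $s$, you get $\sigma_{\int_S G\dd\mu}(u)\ge\int_S\sigma_{G(s)}(u)\dd\mu(s)-1/n$ directly. Dominated convergence is not needed.
\item The exact-argmax paragraph can be deleted. If you keep it, justify it with the measurable maximum theorem for compact-valued maps in Aliprantis--Border. Graph measurability plus Aumann's selection theorem only yields selections measurable for the completion.
\end{itemize}

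What your version gains over the bare citation is transparency about the hypotheses. It shows that convexity of the values is not used. It also shows that nonemptiness, an integrable bound, and (Effros) measurability suffice; for compact-valued maps into $\R^d$, that measurability is exactly what the paper obtains for $\widetilde G$ from outer semicontinuity.
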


The following relationship between support functions and set distances is a key tool in laws of large numbers for set-valued maps \cite{ArtsteinVitale1975,ShapiroXu2007}.

\begin{lemma}[{\cite[Theorem II-18]{CastaingValadier1977}}]
	\label{lem:support_excess_hausdorff}
	Let \(A,B\subset\R^d\) be nonempty compact convex sets. Then
	\[
	\D(A,B)
	=
	\sup_{u\in\mathbb{S}^{d-1}}
	\big(\sigma_A(u)-\sigma_B(u)\big)_+
	\]
	and
	\[
	\HH(A,B)
	=
	\sup_{u\in\mathbb{S}^{d-1}}
	\left|\sigma_A(u)-\sigma_B(u)\right|.
	\]
\end{lemma}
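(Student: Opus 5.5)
The statement to prove is \Cref{lem:support_excess_hausdorff}: $\D(A,B)=\sup_{u\in\mathbb{S}^{d-1}}(\sigma_A(u)-\sigma_B(u))_+$ and the analogous formula for $\HH$. The plan is to prove the excess formula by two inequalities and then obtain the Hausdorff formula by symmetry. Write $\delta:=\D(A,B)$ and $s:=\sup_{u\in\mathbb{S}^{d-1}}(\sigma_A(u)-\sigma_B(u))_+$.

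For $s\le\delta$, fix $u\in\mathbb{S}^{d-1}$ and $a\in A$. By compactness of $B$, pick $b\in B$ with $\|a-b\|=\dist(a,B)\le\delta$. Cauchy--Schwarz then gives
\[
\langle u,a\rangle\le\langle u,b\rangle+\|a-b\|\le\sigma_B(u)+\delta .
\]
Taking the supremum over $a\in A$ yields $\sigma_A(u)-\sigma_B(u)\le\delta$. Since $\delta\ge0$, also $(\sigma_A(u)-\sigma_B(u))_+\le\delta$, and taking the supremum over $u$ gives $s\le\delta$.

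For $\delta\le s$, which is the substantive direction, convexity of $B$ enters through separation. If $\delta=0$ there is nothing to prove. Otherwise, since $a\mapsto\dist(a,B)$ is continuous and $A$ is compact, pick $a\in A$ with $\dist(a,B)=\delta>0$. Let $b=P_B(a)$ be the metric projection, which is well defined because $B$ is closed and convex, and set $u=(a-b)/\|a-b\|\in\mathbb{S}^{d-1}$. The projection characterization $\langle a-b,\,y-b\rangle\le0$ for all $y\in B$ gives $\sigma_B(u)=\langle u,b\rangle$. Hence
\[
\sigma_A(u)-\sigma_B(u)\ge\langle u,a\rangle-\langle u,b\rangle=\|a-b\|=\delta ,
\]
so $s\ge\delta$. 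The main point to check is this projection inequality, which is the step where convexity of $B$ is genuinely used. Convexity of $A$ is not needed.

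For the Hausdorff formula, apply the excess formula to $(A,B)$ and to $(B,A)$. Then use
\[
\max\Bigl\{\sup_u(x_u)_+,\ \sup_u(-x_u)_+\Bigr\}=\sup_u|x_u| ,\qquad x_u:=\sigma_A(u)-\sigma_B(u),
\]
which holds because $\max\{(x)_+,(-x)_+\}=|x|$ for each $u$ and the two suprema can be interchanged with the maximum.
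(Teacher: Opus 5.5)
Your proof is correct and complete. Cauchy--Schwarz gives $\sup_{u}(\sigma_A(u)-\sigma_B(u))_+\le\D(A,B)$. The Euclidean projection of a farthest point $a\in A$ onto $B$ gives an explicit unit direction $u=(a-b)/\|a-b\|$ at which the gap $\D(A,B)$ is attained. The Hausdorff identity then follows by applying the excess identity to $(A,B)$ and to $(B,A)$, which is where convexity of $A$ is used.

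The paper gives no proof and only cites Castaing--Valadier. The standard argument for that result is a separation argument that works in a general normed space:
\begin{itemize}
\item $\dist(a,B)\le r$ for all $a\in A$ means $A\subset\overline{B+rU}$, where $U$ is the closed unit ball.
\item The set $\overline{B+rU}$ is closed and convex with support function $\sigma_B+r\|\cdot\|_*$.
\item Hahn--Banach therefore turns the inclusion into $\sigma_A\le\sigma_B+r\|\cdot\|_*$.
\item Taking the infimum over admissible $r$ gives $\D(A,B)=\sup_{\|u\|_*=1}(\sigma_A(u)-\sigma_B(u))_+$.
\end{itemize}

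Your route is more elementary and constructive, since it exhibits the extremal direction. It does rely on Euclidean structure: existence of projections onto closed convex sets, the projection variational inequality, and the fact that $\langle u,a-b\rangle=\|a-b\|$ for $u=(a-b)/\|a-b\|$. The separation route needs none of this and applies verbatim to closed bounded convex sets in arbitrary normed spaces, with the dual sphere replacing $\mathbb{S}^{d-1}$.

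Even in your setting the farthest point is not needed. Running your construction at every $a\in A$ with $\dist(a,B)>0$ gives $\sup_u(\sigma_A(u)-\sigma_B(u))_+\ge\dist(a,B)$ for each such $a$, and taking the supremum over $a$ gives the claim.
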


\section{Main results}
\label{sec:main_result}

We now turn to the graphical stability of parameterized set-valued integrals.
Let \(\Theta\subset\R^p\) be nonempty and compact, and consider a set-valued map
\[
G:\Theta\times S\rightrightarrows\R^d.
\]
Let \((\mu_k)_{k\in\N}\) and \(\mu\) be probability measures on \(S\) such
that \(\mu_k\Rightarrow\mu\) as $k \to \infty$. We work under the following assumptions.

\begin{assumption}\label{ass:general_assumptions_D}
	  \begin{enumerate}
	  	\item[]
	  	\item{(Joint outer semicontinuity)} \(G:\Theta\times S\rightrightarrows\R^d\) is nonempty compact
	  	convex valued, jointly outer semicontinuous and locally bounded.
	  	\item{(Superlinear integrability)} There
	  	are a locally bounded function \(\kappa:\Theta\to\R_+\), a measurable
	  	function \(m:S\to\R_+\), and a convex function \(\Phi:\R_+\to\R_+\) such
	  	that
	  	\[
	  	\sup_{y\in G(\theta,s)}\|y\|\leq \kappa(\theta)m(s),
	  	\qquad
	  	\lim_{t\to\infty}\frac{\Phi(t)}{t}=\infty,
	  	\]
	  	and \(\sup_{k\in\N\cup\{\infty\}}
	  	\int_S \Phi(m(s))\dd\mu_k(s)<\infty\) where \(\mu_\infty:=\mu\).
	  \end{enumerate}
\end{assumption}
Compared with the i.i.d. empirical setting, where graphical convergence was
established in \cite{NorkinWets2013}, \Cref{ass:general_assumptions_D} requires outer semicontinuity jointly in the parameter and sample variables.  This condition is genuinely needed for general weakly convergent sequences
\((\mu_k)_{k \in \N}\); see \Cref{rem:assumptions}.  The superlinear integrability
condition in item 2, on the other hand, is essentially sharp with respect to
the i.i.d. case; see \Cref{cor:ergodic_sampling}. The graphical stability result states as follows.

\begin{theorem}[Graphical stability under weak measure convergence]
	\label{th:main_theorem}
	Let \(\Theta\subset\R^p\) be nonempty and compact. Under \Cref{ass:general_assumptions_D} where $\mu_k \Rightarrow \mu$ as $k \to \infty$, define for all $\theta \in \Theta$,
	\[
		H_k(\theta):=\int_S G(\theta,s)\dd\mu_k(s),
		\qquad
		H(\theta):=\int_S G(\theta,s)\dd\mu(s).
	\]
	Then
	\[
		\D\left(\graph_\Theta H_k,\graph_\Theta H\right)
		\xrightarrow[k\to\infty]{}0 .
	\]
\end{theorem}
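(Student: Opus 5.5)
The plan is to reduce the graphical statement to a pointwise excess estimate along moving parameters, and then to a scalar upper-semicontinuity statement for support functions. Since \(\Theta\) is compact, I would argue by contradiction directly rather than invoke \Cref{lem:graph_convergence_pointwise_characterization}, which would first require checking outer semicontinuity of \(H_k\). Suppose \(\D(\graph_\Theta H_k,\graph_\Theta H)\ge\eps\) along a subsequence. Pick \((\theta_k,x_k)\in\graph_\Theta H_k\) with \(\dist((\theta_k,x_k),\graph_\Theta H)\ge\eps/2\). Passing to a further subsequence, \(\theta_k\to\theta\in\Theta\). Then
\[
\dist((\theta_k,x_k),\graph_\Theta H)\le\|\theta_k-\theta\|+\dist(x_k,H(\theta)),
\]
so it suffices to prove that \(\D(H_k(\theta_k),H(\theta))\to0\) whenever \(\theta_k\to\theta\).

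First I would set up the integrals. Local boundedness of \(\kappa\) on the compact set \(\Theta\) gives \(\kappa\le C\). The superlinear bound shows that \(m\) is integrable for every \(\mu_k\) and for \(\mu\), uniformly, so \(G(\theta,\cdot)\) is integrably bounded. Joint outer semicontinuity gives upper semicontinuity of \(s\mapsto\sigma_{G(\theta,s)}(u)\), hence measurability, and then a standard measurable selection argument applies. By \Cref{prop:support}, \(H_k(\theta_k)\) and \(H(\theta)\) are nonempty compact convex sets with
\[
\sigma_{H_k(\theta_k)}(u)=\int f_k(u,s)\dd\mu_k(s),\qquad f_k(u,s):=\sigma_{G(\theta_k,s)}(u),
\]
and similarly for \(H(\theta)\) with \(f(u,s):=\sigma_{G(\theta,s)}(u)\). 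By \Cref{lem:support_excess_hausdorff}, the claim becomes
\[
\limsup_k\ \sup_{u\in\mathbb{S}^{d-1}}\Big(\int f_k(u,\cdot)\dd\mu_k-\int f(u,\cdot)\dd\mu\Big)\le 0 .
\]
Support functions of sets bounded by \(Cm(s)\) are \(Cm(s)\)-Lipschitz in \(u\), and the integrals of \(m\) are uniformly bounded. So the family is equi-Lipschitz in \(u\), and by compactness of the sphere it suffices to treat sequences \(u_k\to u\). Equivalently, the key step is
\[
\limsup_k\int\sigma_{G(\theta_k,s)}(u_k)\dd\mu_k(s)\le\int\sigma_{G(\theta,s)}(u)\dd\mu(s).
\]

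This is the main obstacle: a weak-convergence upper bound for integrands that vary with \(k\) and are unbounded. I would handle the unboundedness by truncation. By \Cref{lem:de_la_vallee}, \(m\) is uniformly integrable for \((\mu_k)\), so
\[
\sup_k\int Cm\,\mathbf 1_{\{m\ge M\}}\dd\mu_k\to0 \quad\text{as } M\to\infty .
\]
Replace \(\sigma\) by \(\min(\sigma, CM)\). This lowers the integral by at most the tail term, because the integrand is at most \(Cm\) and lies between \(-Cm\) and \(Cm\); the tail on the lower side \(\sigma\le -CM\) must be checked too, and truncating at \(\max(\cdot,-CM)\) only increases the left side, so it is harmless.

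For the bounded moving integrand I would use Skorokhod representation. Realize \(s_k\sim\mu_k\) and \(s\sim\mu\) with \(s_k\to s\) almost surely; \(S\) is Polish, so this is legitimate. Joint outer semicontinuity of \(G\) at \((\theta,s)\), together with \(u_k\to u\) and the local bound, gives
\[
\limsup_k\sigma_{G(\theta_k,s_k)}(u_k)\le\sigma_{G(\theta,s)}(u)
\]
pointwise. Reverse Fatou then applies to the truncated integrands, which are bounded above by \(CM\), and yields the inequality for them. Letting \(M\to\infty\), with the tail uniformly small for both \(\mu_k\) and \(\mu\), concludes the argument.

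The delicate points are the measurability and nonemptiness of the Aumann integrals, and making the lower truncation consistent. If the latter causes trouble, I would instead apply reverse Fatou with the uniformly integrable dominating sequence \(Cm(s_k)\) directly; uniform integrability of \(m(s_k)\) follows from the \(\Phi\)-bound.
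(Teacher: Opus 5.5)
Your proposal is correct, and it departs from the paper's proof in two places.

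First, the reduction to a moving-parameter pointwise excess. The paper uses \Cref{lem:graph_convergence_pointwise_characterization}, which obliges it to check separately that every $H_k$ and $H$ is outer semicontinuous, locally bounded and compact valued. You use a compactness/contradiction argument with $\dist((\theta_k,x_k),\graph_\Theta H)\le\|\theta_k-\theta\|+\dist(x_k,H(\theta))$, applied along a subsequence of $(\mu_k)$, which inherits all hypotheses. This proves only the implication actually needed and requires no regularity of $H_k$.

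Second, the moving parameter itself. The paper lifts it into the measure, $\gamma_k=\delta_{\theta_k}\otimes\mu_k\Rightarrow\delta_\theta\otimes\mu$ on $\Theta\times S$. It then applies the fixed-integrand \Cref{lem:pointwise} (truncation, Portmanteau for bounded upper semicontinuous functions, uniform integrability, then an equi-Lipschitz $\delta$-net on the sphere). Finally it identifies the lifted integrals with $H_k(\theta_k)$ and $H(\theta)$ via a set-valued Fubini theorem. You instead keep the integrand $s\mapsto\sigma_{G(\theta_k,s)}(u_k)$ moving, use Skorokhod's representation (legitimate since $S$ is Polish) and reverse Fatou along $(\theta_k,s_k)\to(\theta,s)$, and fold $u_k\to u$ into the same limsup instead of using a net.

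The two mechanisms are essentially the same. Skorokhod plus reverse Fatou is one proof of the Portmanteau inequality for upper semicontinuous functions, and $(\theta_k,s_k)$ is just an almost surely convergent coupling realizing $\gamma_k\Rightarrow\gamma$. Your version is more self-contained: it needs no Fubini theorem for Aumann integrals and no graphical characterization lemma. The paper's modular route, in return, isolates a fixed-integrand statement extending Artstein--Wets that it reuses for \Cref{prop:pointwise_hausdorff}.

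Your truncation bookkeeping is sound. The upper cut at $CM$ alone already suffices for reverse Fatou, with the tail controlled uniformly over $k\in\N\cup\{\infty\}$.
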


\begin{remark}[On assumptions] \label{rem:assumptions} Our set of conditions in \Cref{ass:general_assumptions_D} is slightly more restrictive than those required when $\mu_k$ are standard empirical measures from i.i.d. samples \cite{NorkinWets2013}. Let us illustrate through examples that they are required in general. 
	\begin{enumerate}
		\item{On superlinear integrability.} Let \(S=\R\), \(\mu_k:=k^{-1}\delta_k+\left(1-k^{-1}\right)\delta_0\), and
		\(\mu:=\delta_0\).  Then \(\mu_k\Rightarrow\mu\).  Let
		\(G(s):=\{s\}\).  The integrals are singletons with 
		\[
		\int_S G(s)\dd\mu_k(s)=\{1\},
		\qquad
		\int_S G(s)\dd\mu(s)=\{0\}.
		\]
		Thus weak convergence of the measures alone does not imply convergence of
		the set-valued integrals.  In particular, we easily verify that the absolute value \(m(s)=|s|\) is integrable but not uniformly
		integrable along \((\mu_k)_{k \in \N}\), which is exactly what the superlinear
		condition excludes by \Cref{lem:de_la_vallee}.
		\item{On joint outer semicontinuity.} Let \(\Theta=[-1,1]\) and \(S=\R\). Let \(\mu_k:=\delta_{1/k}\) and
		\(\mu:=\delta_0\).  Then \(\mu_k\Rightarrow\mu\).  Define the
		compact convex-valued map
		\[
		G(\theta,s):=
		\begin{cases}
			[0,1], & \text{if } s=1/k\text{ for some }k\in\N
			\text{ and } |\theta|\leq 1/(2k),\\
			\{0\}, & \text{otherwise}.
		\end{cases}
		\]
		For every fixed \(s\), the section \(\theta\mapsto G(\theta,s)\) is clearly outer
		semicontinuous. Furthermore,  the
		map \(G\) is uniformly bounded by \(1\).
		However, \(G\) is not jointly outer semicontinuous at \((0,0)\), since
		\((0,1/k)\to(0,0)\), while \(1\in G(0,1/k)\) and
		\(1\notin G(0,0)\).  The corresponding integrals satisfy
		\[
		H_k(0)=\int_S G(0,s)\dd\mu_k(s)=[0,1],
		\qquad
		H(0)=\int_S G(0,s)\dd\mu(s)=\{0\}
		\]
		hence graphical convergence cannot hold. Thus partial outer semicontinuity in
		the parameter is not sufficient.
	\end{enumerate}
\end{remark}

We can specialize \Cref{th:main_theorem} to empirical measures generated by a possibly dependent sampling
process. More precisely, weak convergence of the empirical measures and the
required integrability can both be deduced from the following ergodicity assumption.

\begin{assumption}[Ergodicity]
	\label{ass:ergodic_sampling}
	 \(P\) is a probability measure on \(S\), and 
	\((\xi_t)_{t\geq1}\) is an \(S\)-valued stochastic process. For every
	$P$-integrable function \(f:S\to\R\), we have
	\[
	\frac1T\sum_{t=1}^T f(\xi_t)
	\xrightarrow[T \to \infty]{}
	\int_S f\dd P
	\qquad\text{almost surely}.
	\]
\end{assumption}
\Cref{ass:ergodic_sampling} covers i.i.d. sampling but it can also include  dependent
sampling schemes. In particular, it holds for positive Harris recurrent
Markov chains with invariant probability measure \(P\); see, e.g.,
\cite[Chapter~17]{MeynTweedie2009}.

\begin{corollary}[Graphical convergence for ergodic sampling]
	\label{cor:ergodic_sampling}
	Let \(\Theta\subset\R^p\) be nonempty and compact. Suppose that \(P\) and \((\xi_t)_{t\geq1}\) satisfy
	\Cref{ass:ergodic_sampling}. Let
	\(G:\Theta\times S\rightrightarrows\R^d\) be measurable, nonempty compact
	convex-valued, jointly outer semicontinuous, and locally bounded. Assume that
	there exist a locally bounded function \(\kappa:\Theta\to\R_+\) and a
	measurable function $m : S \to \R_+$, $P$-integrable, such that
	\[
	\sup_{y\in G(\theta,s)}\|y\|
	\leq \kappa(\theta)m(s).
	\]
	Define $H_T(\theta)
	:=
	\frac1T\sum_{t=1}^T G(\theta,\xi_t)$ and $
	H(\theta)
	:=
	\int_S G(\theta,s)\dd P(s).$ Then
	\[
	\D\bigl(\graph_\Theta H_T,\graph_\Theta H\bigr)
	\xrightarrow[T\to\infty]{}0
	\qquad\text{almost surely}.
	\]
\end{corollary}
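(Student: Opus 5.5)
We plan to derive \Cref{cor:ergodic_sampling} from \Cref{th:main_theorem}, applied pathwise to the empirical (occupation) measures
\[
\mu_T^\omega:=\frac1T\sum_{t=1}^T\delta_{\xi_t(\omega)},\qquad \mu:=P .
\]
First we check that $H_T(\theta)=\int_S G(\theta,s)\dd\mu_T^\omega(s)$. For a finitely supported measure, an integrable selection is determined by its values at the atoms, so the Aumann integral is the weighted Minkowski average $\frac1T\sum_t G(\theta,\xi_t)$; repeated atoms are harmless because the values are convex. It then suffices to exhibit a single event $\Omega_0$ of probability one on which $\mu_T^\omega\Rightarrow P$ and item 2 of \Cref{ass:general_assumptions_D} holds with $\sup_{T}\int\Phi(m)\dd\mu_T^\omega<\infty$. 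Item 1 is part of the hypotheses of the corollary.

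\emph{Integrability.} Since $m$ is $P$-integrable, \Cref{lem:de_la_vallee_consequence} gives a convex superlinear $\Phi$ with $\int\Phi(m)\dd P<\infty$. Applying \Cref{ass:ergodic_sampling} to $f=\Phi\circ m$, we get $\frac1T\sum_t\Phi(m(\xi_t))\to\int\Phi(m)\dd P$ almost surely. A convergent real sequence is bounded, so $\sup_T\int\Phi(m)\dd\mu_T^\omega<\infty$ on a full-measure event. We include $\mu_\infty=P$ in this supremum, which is fine since $\int\Phi(m)\dd P<\infty$. Measurability of $\Phi\circ m$ holds because $\Phi$ is continuous on the interior of its domain and monotone near infinity; we may take $\Phi$ finite and continuous, nondecreasing.

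\emph{Weak convergence.} This is the main obstacle, because \Cref{ass:ergodic_sampling} gives an almost-sure statement for each $f$ separately, with a null set depending on $f$. Uncountably many $f$ must not be used. We will use the separability of the Polish space $S$: there is a countable family $\{f_j\}$ of bounded continuous (indeed bounded Lipschitz) functions that is convergence-determining for weak convergence on $S$. One concrete choice comes from embedding $S$ homeomorphically into a compact metric space, e.g. the Hilbert cube, and taking a countable dense subset of the continuous functions there. Intersecting the countably many full-measure events from \Cref{ass:ergodic_sampling} for $f_j$ gives an event on which $\int f_j\dd\mu_T^\omega\to\int f_j\dd P$ for every $j$.

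On the intersection $\Omega_0$ of this event with the one from the integrability step, we obtain $\mu_T^\omega\Rightarrow P$. We also obtain the uniform superlinear bound with the same $\kappa,m,\Phi$. \Cref{th:main_theorem} then yields $\D(\graph_\Theta H_T,\graph_\Theta H)\to0$ for every $\omega\in\Omega_0$, which is the claimed almost-sure convergence.

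The only point requiring care is the convergence-determining countable class. If needed we can replace it by the portmanteau criterion: pick a countable base $\{U_i\}$ of $S$ closed under finite unions, and use bounded continuous $f_{i,n}\uparrow\mathbf 1_{U_i}$. Almost-sure convergence for all $f_{i,n}$ gives $\liminf_T\mu_T^\omega(U)\ge P(U)$ for every finite union $U$ of base sets. Since every open set is an increasing union of such finite unions, the liminf inequality extends to all open sets by continuity of $P$ from below, which proves weak convergence.
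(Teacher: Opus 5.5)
Your proposal is correct and follows the paper's proof. You obtain $\Phi$ from de la Vall\'ee-Poussin and apply the ergodic assumption to $\Phi\circ m$ and to a countable family built from a countable base closed under finite unions. Intersecting the resulting full-measure events gives, pathwise, $\widehat P_T\Rightarrow P$ together with the uniform superlinear bound, and \Cref{th:main_theorem} then applies. The differences are minor: the paper applies the ergodic assumption directly to the indicators $\mathbf 1_U$, which are $P$-integrable, so your continuous approximations $f_{i,n}$ (and the Hilbert-cube route) are not needed. You also spell out why the Minkowski average equals the Aumann integral against the empirical measure, which the paper leaves implicit.
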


\begin{proof}
	By \Cref{lem:de_la_vallee_consequence}, there exists a convex superlinear
	function \(\Phi:\R_+\to\R_+\) such that $\int_S\Phi(m(s))\dd P(s)<\infty.$ Applying \Cref{ass:ergodic_sampling} to \(\Phi\circ m\) gives
	\[
	\frac1T\sum_{t=1}^T\Phi(m(\xi_t))
	\longrightarrow
	\int_S\Phi(m(s))\dd P(s)
	\qquad\text{almost surely}.
	\]
	In particular, the sequence on the left-hand side is almost surely bounded. Moreover, since \(S\) is Polish, its topology admits a countable basis.
Let \(\mathcal U\) be the countable family consisting of all finite unions of
elements of this basis. Let $\widehat{P}_T := \frac{1}{T} \sum_{t=1}^T \delta_{{\xi_t}}$ Applying \Cref{ass:ergodic_sampling} to the indicator
of each \(U\in\mathcal U\), and using countability, gives $\widehat P_T(U)\longrightarrow P(U)$ $\text{for every }U\in\mathcal U$
simultaneously on an event of probability one. Every open subset of \(S\) is an increasing union of sets from
\(\mathcal U\). Consequently \cite[Theorem 2.2]{Billingsley1999} gives the weak convergence \(\widehat P_T\Rightarrow P\) almost surely. Therefore, for almost every sample path, the measures
	\(\mu_T=\widehat P_T\) and \(\mu=P\) satisfy
	\Cref{ass:general_assumptions_D}. The conclusion follows from
	\Cref{th:main_theorem}.
\end{proof}

Now, let us state a consequence of \Cref{th:main_theorem} for the stability of generalized stochastic equations.

\begin{corollary}[Stability of stochastic generalized equations]\label{cor:stochastic_ge}
Let \(\Theta\subset\R^p\) be nonempty and compact. Under \Cref{ass:general_assumptions_D} with $\mu_k \Rightarrow \mu$ as $k \to \infty$, let
$N:\Theta\rightrightarrows\R^d$ be closed-valued and outer semicontinuous.
Consider
\begin{equation*}
Z_k = \left\{	\theta \in \Theta \ : \ 0 \in \int_S G(\theta,s) \dd \mu_k(s) + N(\theta)\right\} \qquad \text{for all } k \in \N,
\end{equation*}
\begin{equation*}
	Z = \left\{	\theta \in \Theta \ : \ 0 \in \int_S G(\theta,s) \dd \mu(s) + N(\theta)\right\}.
\end{equation*}
If for all $k \in \N$, $Z_k$ and \(Z\) are nonempty, then $\D(Z_k,Z) \xrightarrow[k \to \infty]{} 0.$
\end{corollary}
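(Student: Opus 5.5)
We argue by contradiction, using compactness of \(\Theta\) and the graphical convergence given by \Cref{th:main_theorem}. Suppose \(\D(Z_k,Z)\not\to 0\). Then, along a subsequence (not relabelled), there exist \(\eps>0\) and \(\theta_k\in Z_k\) with \(\dist(\theta_k,Z)\geq\eps\). Here \(Z_k\) need not be closed, so we work with the supremum definition of \(\D\) directly; this is harmless because \(\dist(\cdot,Z)\) only needs \(Z\) nonempty. Since \(\Theta\) is compact, we may pass to a further subsequence with \(\theta_k\to\bar\theta\in\Theta\). Because \(\dist(\cdot,Z)\) is continuous, \(\dist(\bar\theta,Z)\geq\eps\), and in particular \(\bar\theta\notin Z\). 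It therefore suffices to show \(\bar\theta\in Z\).

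Since \(\theta_k\in Z_k\), there is \(x_k\in H_k(\theta_k)\) with \(-x_k\in N(\theta_k)\). First we bound \(x_k\) uniformly. Every \(y\in G(\theta_k,s)\) satisfies \(\|y\|\le\kappa(\theta_k)m(s)\). The function \(\kappa\) is locally bounded, and \(\Theta\) is compact, so \(\bar\kappa:=\sup_\Theta\kappa<\infty\). Moreover, \(\sup_k\int m\dd\mu_k<\infty\) follows from the superlinear condition, since \(\Phi(t)\ge t\) for \(t\) large. Any integrable selection therefore has integral of norm at most \(\bar\kappa\sup_k\int m\dd\mu_k\), so \((x_k)\) is bounded and we may assume \(x_k\to\bar x\).

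Next we identify the limit. By \Cref{th:main_theorem}, \(\D(\graph_\Theta H_k,\graph_\Theta H)\to0\), so there are \((\theta_k',x_k')\in\graph_\Theta H\) with \(\|(\theta_k,x_k)-(\theta_k',x_k')\|\to0\). Hence \(\theta_k'\to\bar\theta\) and \(x_k'\to\bar x\). To conclude \(\bar x\in H(\bar\theta)\) we need \(\graph_\Theta H\) to be closed, that is, \(H\) outer semicontinuous on \(\Theta\). I expect this to be the main point requiring care. It can be obtained by applying \Cref{th:main_theorem} itself to the constant sequence \(\mu_k=\mu\): this gives \(\D(\graph_\Theta H,\graph_\Theta H)=0\), which is trivial and yields nothing. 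Instead, we prove closedness directly via support functions. By \Cref{prop:support}, \(\sigma_{H(\theta)}(u)=\int\sigma_{G(\theta,s)}(u)\dd\mu(s)\). The integrand is upper semicontinuous in \(\theta\) by the lemma on upper semicontinuity of support functions, and it is dominated by \(\bar\kappa m\in L^1(\mu)\). Reverse Fatou then gives \(\limsup_k\sigma_{H(\theta_k')}(u)\le\sigma_{H(\bar\theta)}(u)\). Since \(\langle u,x_k'\rangle\le\sigma_{H(\theta_k')}(u)\), passing to the limit yields \(\langle u,\bar x\rangle\le\sigma_{H(\bar\theta)}(u)\) for all \(u\). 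As \(H(\bar\theta)\) is closed and convex, it follows that \(\bar x\in H(\bar\theta)\).

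Finally, \(-x_k\in N(\theta_k)\) with \(\theta_k\to\bar\theta\) and \(-x_k\to-\bar x\). Outer semicontinuity of \(N\) (closed graph) together with closedness of \(N(\bar\theta)\) gives \(-\bar x\in N(\bar\theta)\). Therefore \(0=\bar x+(-\bar x)\in H(\bar\theta)+N(\bar\theta)\), so \(\bar\theta\in Z\). This contradicts \(\dist(\bar\theta,Z)\ge\eps\) and proves \(\D(Z_k,Z)\to0\).

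One technical check is needed: \Cref{prop:support} requires measurability and integrable boundedness of \(s\mapsto G(\theta,s)\). Both hold here, since outer semicontinuity gives measurability and \(\|G(\theta,s)\|\le\kappa(\theta)m(s)\) gives the integrable bound.
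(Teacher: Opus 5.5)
Your proof is correct and follows essentially the same route as the paper. Both arguments bound $x_k\in H_k(\theta_k)$ uniformly and pass to the limit in $x_k\in H_k(\theta_k)$ and $-x_k\in N(\theta_k)$, using \Cref{th:main_theorem} for the first and outer semicontinuity of $N$ for the second, and both conclude with a compactness contradiction. The differences are minor: you carry out directly the solution-set step that the paper delegates to \cite[Theorem 5.37(a)]{RockafellarWets1998}, and you prove closedness of $\graph_\Theta H$ by reverse Fatou (needing only upper semicontinuity in $\theta$ for each fixed $s$), whereas the paper takes outer semicontinuity of $H$ from the proof of \Cref{th:main_theorem}.
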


This contrasts with previous stability results for stochastic generalized equations such
as \cite{LiuRomischXu2014}, where perturbations are measured through a
problem-dependent discrepancy. In our case, the starting point is weak convergence of probability measures, the natural
mode of convergence for the underlying laws, and the assumptions identify when
it is strong enough to imply stability.

\begin{proof}
	Write
	\[
		H_k(\theta):=\int_S G(\theta,s)\dd\mu_k(s),
		\qquad
		H(\theta):=\int_S G(\theta,s)\dd\mu(s),
	\]
	and
	\[
		I_k(\theta):=H_k(\theta)+N(\theta),
		\qquad
		I(\theta):=H(\theta)+N(\theta).
	\]
	We first verify the graphical hypothesis required in
	\cite[Theorem 5.37(a)]{RockafellarWets1998}.  Namely, take any sequence
	\(k_j\to\infty\) and any convergent graph points
	\[
		\theta_j\to\theta,\qquad z_j\to z,\qquad
		z_j\in I_{k_j}(\theta_j).
	\]
	We show that \(z\in I(\theta)\). Under \Cref{ass:general_assumptions_D},
	\[
		M:=\sup_{k\in\N\cup\{\infty\}}\int_S m(s)\dd\mu_k(s)<\infty,
	\]
	where \(\mu_\infty:=\mu\).  Since \(\Theta\) is compact and \(\kappa\) is
	locally bounded, we have, with \(H_\infty:=H\),
	\[
		\sup_{k\in\N\cup\{\infty\}}\sup_{\theta\in\Theta}
		\sup_{h\in H_k(\theta)}\|h\|
		\leq
		M\sup_{\theta\in\Theta}\kappa(\theta)<\infty .
	\]
	Choose
	\[
		h_j\in H_{k_j}(\theta_j),
		\qquad
		n_j\in N(\theta_j),
		\qquad
		z_j=h_j+n_j .
	\]
	The sequence \((h_j)_{j \in \N}\) is bounded by the estimate above.  Since \((z_j)_{j \in \N}\)
	converges, \((n_j)_{j \in \N}\) is bounded as well.  Passing to a subsequence, assume
	that \(h_j\to h\) and \(n_j\to n\). By \Cref{th:main_theorem}, \(h\in
	H(\theta)\). By outer semicontinuity of \(N\), \(n\in N(\theta)\).  Hence
	\(z=h+n\in I(\theta)\), as required.  Moreover \(I_k\) and \(I\) are
	closed-valued, since they are sums of compact and closed subsets of a
	finite-dimensional space.
	
	We may therefore apply \cite[Theorem 5.37(a)]{RockafellarWets1998} to the
	generalized equations \(I_k(\theta)\ni0\) and \(I(\theta)\ni0\).  It follows
	that every cluster point of a sequence \(\theta_j\in Z_{k_j}\),
	\(k_j\to\infty\), belongs to \(Z\).
	
	
	Assume now that \(Z\neq\emptyset\).  If \(\D(Z_k,Z)\not\to0\), then for some
	\(\varepsilon>0\) there are \(k_j\to\infty\) and \(\theta_j\in Z_{k_j}\) such
	that \(\dist(\theta_j,Z)\geq\varepsilon\).  By compactness, after passing to a
	subsequence, \(\theta_j\to\theta\).  The conclusion of
	\cite[Theorem 5.37(a)]{RockafellarWets1998} gives \(\theta\in Z\), and hence
	\(\dist(\theta_j,Z)\leq\|\theta_j-\theta\|\to0\), which is a contradiction.  Therefore
	\(\D(Z_k,Z)\to0\).
\end{proof}

\paragraph{Proof of the main result} The proof of \Cref{th:main_theorem} proceeds by establishing the pointwise
criterion in \Cref{lem:graph_convergence_pointwise_characterization} after
lifting the parameter into the probability space as a moving Dirac mass.
We therefore begin with a pointwise convergence lemma, which extends the
approximation result of Artstein and Wets \cite[Theorem 4.2]{artstein1988approximating} to
outer semicontinuous and possibly unbounded integrable maps.

\begin{lemma}[Pointwise outer convergence] \label{lem:pointwise} Let $\widetilde{G} : S \rightrightarrows \R^d$ be nonempty
	compact convex valued, outer semicontinuous, and locally bounded. Let
	$\gamma_k \Rightarrow \gamma$ as $k \to \infty$. Assume there exists a
	measurable function $\tilde{m} : S \to \R_+$ such that $\sup_{x\in \widetilde{G}(s)}\|x\|\leq \tilde{m}(s)$ and a convex function $\Phi  : \R_+ \to \R_+$ such that
	$\lim_{t \to \infty} \frac{\Phi(t)}{t} = \infty$ and, with
	$\gamma_\infty:=\gamma$, $\sup_{k \in \N\cup\{\infty\}}
	\int_S \Phi \circ \tilde{m} \dd \gamma_k < \infty.$ Then $\D \left( \int_S \widetilde{G} \dd \gamma_k, \int_S \widetilde{G} \dd \gamma \right) \xrightarrow[k \to \infty]{} 0.$
\end{lemma}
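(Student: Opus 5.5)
We work with support functions. Both $\int_S \widetilde G\dd\gamma_k$ and $\int_S \widetilde G\dd\gamma$ are nonempty compact convex sets: integrable boundedness follows from $\tilde m$ being integrable for each $\gamma_k$, since $\Phi$ is superlinear and $\sup_k\int\Phi\circ\tilde m\dd\gamma_k<\infty$. By \Cref{lem:support_excess_hausdorff} and \Cref{prop:support}, the claim becomes
\[
\sup_{u\in\mathbb{S}^{d-1}}\Bigl(\int_S f_u\dd\gamma_k-\int_S f_u\dd\gamma\Bigr)_+\xrightarrow[k\to\infty]{}0,
\qquad f_u(s):=\sigma_{\widetilde G(s)}(u).
\]
Each $f_u$ is upper semicontinuous in $s$ by the semicontinuity lemma for support functions. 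Moreover $|f_u|\le\tilde m$, and $u\mapsto f_u(s)$ is $\tilde m(s)$-Lipschitz.

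\textbf{Fixed direction $u$.} We first show $\limsup_k\int f_u\dd\gamma_k\le\int f_u\dd\gamma$. Truncate with $f_u^M:=\max(f_u,-M)$ and $\min(\cdot,M)$. Then $\min(f_u^M,M)$ is bounded and upper semicontinuous, so the Portmanteau theorem gives $\limsup_k\int\min(f_u^M,M)\dd\gamma_k\le\int\min(f_u^M,M)\dd\gamma$. The truncation error is at most $\int \tilde m\mathbf 1_{\{\tilde m\ge M\}}\dd\gamma_k$, which is small uniformly in $k\in\N\cup\{\infty\}$ by \Cref{lem:de_la_vallee}. Letting $M\to\infty$ yields the one-sided limit.

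\textbf{Uniformity in $u$.} The family $\{u\mapsto\int f_u\dd\gamma_k\}_k$ is equi-Lipschitz on the sphere, with constant $\sup_k\int\tilde m\dd\gamma_k<\infty$. Cover $\mathbb{S}^{d-1}$ by a finite $\eta$-net $\{u_1,\dots,u_N\}$ and apply the fixed-direction bound at each $u_i$. This gives
\[
\limsup_k\sup_u\Bigl(\int f_u\dd\gamma_k-\int f_u\dd\gamma\Bigr)_+\le 2\eta\sup_{k}\int\tilde m\dd\gamma_k.
\]
Then let $\eta\to0$.

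\textbf{Main obstacle.} The delicate step is the fixed-direction bound, because $f_u$ is only upper semicontinuous and unbounded. The Portmanteau inequality for upper semicontinuous functions requires boundedness above, which is exactly where uniform integrability enters. Lower truncation also needs care: we use $f_u\ge-\tilde m$, so replacing $f_u$ by $\max(f_u,-M)$ costs at most $\int\tilde m\mathbf 1_{\{\tilde m> M\}}\dd\gamma_k$, again uniformly small.

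A secondary point is measurability of $\widetilde G$, which \Cref{prop:support} needs. It follows from outer semicontinuity, since a closed-graph map into $\R^d$ is Borel measurable.
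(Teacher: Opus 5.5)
Your proposal is correct and follows essentially the same route as the paper: reduction to support functions, upper semicontinuity of $s\mapsto\sigma_{\widetilde G(s)}(u)$, two-sided truncation with Portmanteau and the de la Vall\'ee-Poussin tail bound for each fixed direction, and then an equi-Lipschitz finite-net argument on $\mathbb{S}^{d-1}$. The only minor difference is that you use the tail bound at $k=\infty$ where the paper uses dominated convergence; your remarks on closed-graph measurability and on integrability of $\tilde m$ under each $\gamma_k$ correctly spell out points the paper treats briefly.
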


\begin{proof}  Note that $\widetilde{G}$ is Borel measurable because outer semicontinuous, nonempty compact valued, and $\R^d$ is $\sigma$-compact; see, e.g., \cite[Theorem 18.10]{infinite}. Set
	\[
	\widetilde{H}_k:=\int_S \widetilde{G}\dd\gamma_k,
	\qquad
	\widetilde{H}:=\int_S \widetilde{G}\dd\gamma .
	\]
	The support function properties, see \Cref{prop:support} and \Cref{lem:support_excess_hausdorff}, give us 
	
	\begin{equation*}
		\D \left( \widetilde{H}_k, \widetilde{H} \right)
		= \sup_{\|u\| = 1} \left(\sigma_{\widetilde{H}_k}(u)-\sigma_{\widetilde{H}}(u)\right)_{+}
		= \sup_{\|u\| = 1} \left(\int_S \sigma_{\widetilde{G}(s)}(u) \dd \gamma_k(s) - \int_S \sigma_{\widetilde{G}(s)}(u) \dd \gamma(s)\right)_{+}.
	\end{equation*}
	By outer semicontinuity and local boundedness of $\widetilde{G}$, $s \mapsto \sigma_{\widetilde{G}(s)}(u)$ is upper semicontinuous. Let us show that for any $u \in \mathbb{S}^{d-1}$,
	\begin{equation}
		\label{eq:fixed_u_limsup_sigma}
		\limsup_{k \to \infty} \int_S \sigma_{ \widetilde{G}(s)} (u) \dd \gamma_k(s)  \leq \int_S \sigma_{\widetilde{G}(s)}(u) \dd \gamma(s).  
	\end{equation}
	Indeed, for $M > 0$, consider the upper semicontinuous truncations
	\[
	\psi_M(s):=\max\{-M,\min\{\sigma_{\widetilde{G}(s)}(u),M\}\}.
	\]
	Then we can write 
	\begin{equation*}
		\sigma_{\widetilde{G}(s)}(u) = \sigma_{\widetilde{G}(s)}(u) \mathbf{1}_{\{|\sigma_{\widetilde{G}(s)}(u)|> M\}} + \psi_M(s) \mathbf{1}_{\{|\sigma_{\widetilde{G}(s)}(u)|\leq M\}}. 
	\end{equation*}
	We compare $\sigma_{\widetilde{G}(\cdot)}(u)$ with $\psi_M$ after integration.  Since
	$\psi_M(s)=\sigma_{\widetilde{G}(s)}(u)$ whenever
	$|\sigma_{\widetilde{G}(s)}(u)|\leq M$,
	\[
	0\leq
	\left|\sigma_{\widetilde{G}(s)}(u)-\psi_M(s)\right|
	\leq
	|\sigma_{\widetilde{G}(s)}(u)|
	\mathbf{1}_{\{|\sigma_{\widetilde{G}(s)}(u)|>M\}} .
	\]
	Moreover, $|\sigma_{\widetilde{G}(s)}(u)|\leq \tilde{m}(s)$, and therefore
	$\{|\sigma_{\widetilde{G}(s)}(u)|>M\}\subset \{\tilde{m}(s)>M\}$.  Thus, for every
	$k\in\N$,
	\[
	\int_S \sigma_{\widetilde{G}(s)}(u)\dd\gamma_k(s)
	\leq
	\int_S \psi_M(s)\dd\gamma_k(s)
	+
	\int_S \tilde{m}(s)\mathbf{1}_{\{\tilde{m}(s)>M\}}\dd\gamma_k(s).
	\]
	The de la Vallee-Poussin criterion, see
	\Cref{lem:de_la_vallee}, gives the uniform tail estimate
	\[
	\lim_{M\to\infty}
	\sup_{k\in\N\cup\{\infty\}}
	\int_S \tilde{m}(s)\mathbf{1}_{\{\tilde{m}(s)>M\}}\dd\gamma_k(s)
	=0.
	\]
	On the other hand, $\psi_M$ is bounded and upper semicontinuous, so the
	Portmanteau theorem gives
	\[
	\limsup_{k\to\infty}
	\int_S \psi_M(s)\dd\gamma_k(s)
	\leq
	\int_S \psi_M(s)\dd\gamma(s).
	\]
	Combining the two estimates, for every fixed $M>0$,
	\[
	\limsup_{k\to\infty}
	\int_S \sigma_{\widetilde{G}(s)}(u)\dd\gamma_k(s)
	\leq
	\int_S \psi_M(s)\dd\gamma(s)
	+
	\sup_{j\in\N\cup\{\infty\}}
	\int_S \tilde{m}(s)\mathbf{1}_{\{\tilde{m}(s)>M\}}\dd\gamma_j(s).
	\]
	Finally, $|\psi_M|\leq \tilde{m}$ and $\psi_M(s)\to\sigma_{\widetilde{G}(s)}(u)$ pointwise.
	Since $\tilde{m}$ is $\gamma$-integrable, dominated convergence yields
	\[
	\int_S \psi_M(s)\dd\gamma(s)
	\xrightarrow[M\to\infty]{}
	\int_S \sigma_{\widetilde{G}(s)}(u)\dd\gamma(s).
	\]
	Letting $M\to\infty$ gives \eqref{eq:fixed_u_limsup_sigma}. Finally, \eqref{eq:fixed_u_limsup_sigma} writes
	\[
	\limsup_{k \to \infty}
	\left(\sigma_{\widetilde{H}_k}(u)-\sigma_{\widetilde{H}}(u)\right)_{+}=0
	\qquad
	\text{for every }u\in\mathbb{S}^{d-1}.
	\]
	
	It remains to pass from pointwise convergence on the sphere to uniform
	convergence.  The same envelope gives a uniform Lipschitz bound.  Since
	$\Phi(t)/t\to\infty$, there is a constant
	\[
	C:=\sup_{k\in\N\cup\{\infty\}}\int_S \tilde{m}(s)\dd\gamma_k(s)<\infty .
	\]
	Therefore, for all $u,u'\in\mathbb{S}^{d-1}$,
	\[
	|\sigma_{\widetilde{H}_k}(u')-\sigma_{\widetilde{H}_k}(u)|
	\leq
	\int_S |\sigma_{\widetilde{G}(s)}(u')-\sigma_{\widetilde{G}(s)}(u)|\dd\gamma_k(s)
	\leq
	C\|u'-u\|.
	\]
	The same estimate holds for $\sigma_{\widetilde{H}}$.  Let $\eps>0$ and choose a finite
	$\delta$-net $\{u_1,\ldots,u_N\}$ of $\mathbb{S}^{d-1}$ with
	$2C\delta<\eps$.  For any $u\in\mathbb{S}^{d-1}$, choose $u_i$ with
	$\|u-u_i\|\leq\delta$.  Then
	\[
	\left(\sigma_{\widetilde{H}_k}(u)-\sigma_{\widetilde{H}}(u)\right)_{+}
	\leq
	\left(\sigma_{\widetilde{H}_k}(u_i)-\sigma_{\widetilde{H}}(u_i)\right)_{+}
	+2C\delta .
	\]
	Taking the supremum over $u$ and then the limsup in $k$ yields
	\[
	\limsup_{k\to\infty}\D(\widetilde{H}_k,\widetilde{H})
	\leq
	\max_{1\leq i\leq N}
	\limsup_{k\to\infty}
	\left(\sigma_{\widetilde{H}_k}(u_i)-\sigma_{\widetilde{H}}(u_i)\right)_{+}
	+2C\delta
	\leq \eps.
	\]
	Since $\eps>0$ was arbitrary, $\D(\widetilde{H}_k,\widetilde{H})\to0$.	
\end{proof}

\bigskip

We may now deduce \Cref{th:main_theorem}.

\begin{proof}[Proof of \Cref{th:main_theorem}]
	We verify the sequential condition in
	\Cref{lem:graph_convergence_pointwise_characterization}.  Let
	$\theta_k\to\theta$ in $\Theta$.  Define
	\[
	\widetilde G:\Theta\times S\rightrightarrows\R^d,
	\qquad
	\widetilde G(\vartheta,s):=G(\vartheta,s),
	\]
	and set
	\[
	\gamma_k:=\delta_{\theta_k}\otimes \mu_k,
	\qquad
	\gamma:=\delta_{\theta}\otimes \mu .
	\]
	By Slutsky's theorem, $\gamma_k\Rightarrow \gamma$ on
	$\Theta\times S$.  Since $\Theta$ is compact and $\kappa$ is locally
	bounded, there exists $K\geq1$ such that
	$\kappa(\vartheta)\leq K$ for all $\vartheta\in\Theta$.  Hence
	\[
	\sup_{x\in \widetilde G(\vartheta,s)}\|x\|
	\leq K m(s).
	\]
	With $\widetilde m(\vartheta,s):=Km(s)$ and
	$\widetilde\Phi(t):=\Phi(t/K)$, we have
	\[
	\sup_{k\in\N\cup\{\infty\}}
	\int_{\Theta\times S}\widetilde\Phi\circ \widetilde m
	\dd\gamma_k
	=
	\sup_{k\in\N\cup\{\infty\}}
	\int_S \Phi\circ m \dd\mu_k
	<\infty .
	\]
	The pointwise convergence from \Cref{lem:pointwise} applies to $\widetilde G$ and gives
	\[
	\D\left(
	\int_{\Theta\times S}\widetilde G\dd\gamma_k,
	\int_{\Theta\times S}\widetilde G\dd\gamma
	\right)\to0 .
	\]
	Note that by Fubini theorem for set-valued maps, see \cite[Theorem 2.1]{ZHANG1994355fubini} or \cite[Proposition 2.6]{LiuRomischXu2014}, we can write
	\[
	\int_{\Theta\times S}\widetilde G\dd(\delta_{\theta_k}\otimes\mu_k)
	=
	\int_S G(\theta_k,s)\dd\mu_k(s)
	=
	H_k(\theta_k),
	\]
	and similarly
	\[
	\int_{\Theta\times S}\widetilde G\dd(\delta_{\theta}\otimes\mu)
	=
	H(\theta).
	\]
	 Hence for every sequence $\theta_k\to\theta$ in $\Theta$,
	\[
	\D\bigl(H_k(\theta_k),H(\theta)\bigr)\to0.
	\]
	The same argument with the measure sequence fixed shows that each $H_k$ and
	$H$ is outer semicontinuous on $\Theta$.  Standard properties of the
	Aumann integral give nonempty compact convex values, and the integrable bound $m$ gives local boundedness: indeed,
	\[
	\sup_{y\in H_k(\vartheta)}\|y\|
	\leq
	\kappa(\vartheta)\int_S m(s)\dd\mu_k(s),
	\qquad
	\sup_{y\in H(\vartheta)}\|y\|
	\leq
	\kappa(\vartheta)\int_S m(s)\dd\mu(s).
	\]
	The graphical characterization from	\Cref{lem:graph_convergence_pointwise_characterization} now yields the desired result,
	\[
	\D(\graph_\Theta H_k,\graph_\Theta H)\to0 .
	\]
\end{proof}

\paragraph{On two-sided pointwise convergence} The one-sided excess in \Cref{lem:pointwise} is the natural consequence of
outer semicontinuity and Portmanteau theorem. Now,  if the support
functions are continuous at the points charged by the limiting measure, then
the same argument gives Hausdorff convergence, as in the fully continuous setting \cite[Theorem 3.1]{artstein1988approximating}.

The semialgebraic setting provides a directly verifiable sufficient condition.
Indeed, when \(S=\R^q\) and \(G\) is semialgebraic and outer semicontinuous,
its points of discontinuity form a set of dimension at most \(q-1\)
\cite[Theorem~28]{daniilidis2011continuity}. This set is negligible for any
probability measure absolutely continuous with respect to Lebesgue measure.
The following proposition records both the general criterion and this
ready-to-use case.

\begin{proposition}[Pointwise Hausdorff convergence]\label{prop:pointwise_hausdorff}
In the setting of \Cref{lem:pointwise}, assume furthermore that $\widetilde{G}$ is continuous with respect to the Hausdorff distance $\gamma$-almost everywhere.  Then
	\[
	\HH\left(\int_S \widetilde{G}\dd\gamma_k,\int_S \widetilde{G}\dd\gamma\right)
	\xrightarrow[k\to\infty]{}0 .
	\]
In particular, this holds if $\widetilde{G}$ is semialgebraic and $\gamma$ is absolutely continuous with respect to Lebesgue measure.
\end{proposition}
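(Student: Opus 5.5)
Since \Cref{lem:pointwise} already gives $\D(\widetilde H_k,\widetilde H)\to0$, it suffices to prove the reverse excess $\D(\widetilde H,\widetilde H_k)\to0$. By \Cref{lem:support_excess_hausdorff} and \Cref{prop:support},
\[
\D(\widetilde H,\widetilde H_k)=\sup_{\|u\|=1}\Big(\int_S\sigma_{\widetilde G(s)}(u)\dd\gamma(s)-\int_S\sigma_{\widetilde G(s)}(u)\dd\gamma_k(s)\Big)_+ .
\]
We will prove, for each fixed $u\in\mathbb S^{d-1}$,
\[
\liminf_{k\to\infty}\int_S\sigma_{\widetilde G(s)}(u)\dd\gamma_k(s)\ \ge\ \int_S\sigma_{\widetilde G(s)}(u)\dd\gamma(s).
\]
Combined with \eqref{eq:fixed_u_limsup_sigma}, this gives $\sigma_{\widetilde H_k}(u)\to\sigma_{\widetilde H}(u)$ for every $u$. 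The Lipschitz bound and the $\delta$-net argument from the end of the proof of \Cref{lem:pointwise} then upgrade this to uniform convergence on the sphere, and \Cref{lem:support_excess_hausdorff} yields $\HH(\widetilde H_k,\widetilde H)\to0$.

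For the fixed-$u$ statement, we reuse the truncations $\psi_M$. Write $f(s):=\sigma_{\widetilde G(s)}(u)$. If $\widetilde G$ is Hausdorff continuous at $s$, then $f$ is continuous at $s$, since $|\sigma_A(u)-\sigma_B(u)|\le\HH(A,B)$. Hence the discontinuity set of $f$, and therefore of $\psi_M$, is $\gamma$-null. Since $\psi_M$ is bounded, the continuous mapping form of the Portmanteau theorem (\cite[Theorem 2.7]{Billingsley1999}, or directly its bounded a.e.-continuous version) gives $\int\psi_M\dd\gamma_k\to\int\psi_M\dd\gamma$. Moreover,
\[
\int f\dd\gamma_k\ \ge\ \int\psi_M\dd\gamma_k-\sup_{j\in\N\cup\{\infty\}}\int\tilde m\mathbf 1_{\{\tilde m>M\}}\dd\gamma_j .
\]
Taking $\liminf_k$, then letting $M\to\infty$ and using the de la Vall\'ee-Poussin tail estimate from \Cref{lem:de_la_vallee} together with dominated convergence under $\gamma$, we obtain the desired $\liminf$ bound. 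This step is the only genuinely new ingredient, and it amounts to replacing the upper-semicontinuous Portmanteau inequality by the two-sided one for a.e.-continuous bounded functions.

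For the semialgebraic case, take $S=\R^q$ and $\widetilde G$ semialgebraic and outer semicontinuous, as in the setting of \Cref{lem:pointwise}. By \cite[Theorem~28]{daniilidis2011continuity}, the set of points where $\widetilde G$ fails to be continuous is semialgebraic of dimension at most $q-1$, so it has Lebesgue measure zero and hence $\gamma$-measure zero when $\gamma\ll$ Lebesgue. The main point requiring care is that the notion of continuity there, Painlev\'e--Kuratowski continuity, coincides with Hausdorff continuity. This holds because $\widetilde G$ is locally bounded and compact valued: near such a point the values lie in a fixed compact set, where Painlev\'e--Kuratowski and Hausdorff convergence agree. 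The general criterion then applies.
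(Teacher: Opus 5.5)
Your proposal is correct and follows the paper's proof essentially step for step. It shows that $s\mapsto\sigma_{\widetilde G(s)}(u)$ is $\gamma$-a.e.\ continuous, uses truncation, uniform integrability and the a.e.-continuous Portmanteau theorem to get $\sigma_{\widetilde H_k}(u)\to\sigma_{\widetilde H}(u)$ for each fixed $u$, upgrades this to uniform convergence on $\mathbb{S}^{d-1}$ with the uniform Lipschitz bound and a finite net, and handles the semialgebraic case with \cite[Theorem~28]{daniilidis2011continuity}. Your small variations are all sound: you use $|\sigma_A(u)-\sigma_B(u)|\leq\HH(A,B)$ in place of Berge's theorem, you prove only the missing $\liminf$ direction, and you explicitly check that Painlev\'e--Kuratowski and Hausdorff continuity coincide for locally bounded compact-valued maps, a point the paper leaves implicit.
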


\begin{proof}
	Fix \(u\in\mathbb{S}^{d-1}\). By continuity of $\widetilde{G}$ $\gamma$-almost everywhere, the support function
	\(s\mapsto\sigma_{\widetilde{G}(s)}(u)\) is
	\(\gamma\)-almost everywhere continuous. This holds for instance by Berge maximum theorem \cite[Theorem 17.31]{infinite}.  It is also dominated by the integrable function \(m\) given under the setting of \Cref{lem:pointwise}. Hence the Portmanteau theorem, combined with the same truncation and uniform-integrability
	argument used in the proof of \Cref{lem:pointwise}, gives
	\[
	\int_S \sigma_{\widetilde{G}(s)}(u)\dd\gamma_k(s)
	\xrightarrow[k \to \infty]{}
	\int_S \sigma_{\widetilde{G}(s)}(u)\dd\gamma(s).
	\]
	The support-function representation therefore gives pointwise convergence
	of \(\sigma_{\widetilde{H}_k}\) to \(\sigma_{\widetilde{H}}\) on \(\mathbb{S}^{d-1}\), where $\widetilde{H}_k = \int_S \widetilde{G} \dd \gamma_k$, $\widetilde{H} = \int_S \widetilde{G} \dd \gamma$.  The integrable bound 
	\(\tilde{m}\) gives the uniform Lipschitz estimate
	\[
	|\sigma_{\widetilde{H}_k}(u)-\sigma_{\widetilde{H}_k}(v)|
	\leq
	C\|u-v\|,
	\qquad
	C:=\sup_{k\in\N\cup\{\infty\}}\int_S \tilde{m}\dd\gamma_k<\infty,
	\]
	and the same estimate holds for \(\sigma_{\widetilde{H}}\).  A finite covering argument on
	\(\mathbb{S}^{d-1}\), as in the proof of \Cref{lem:pointwise}, implies the uniform convergence $\sup_{\|u\|=1}
	|\sigma_{\widetilde{H}_k}(u)-\sigma_{\widetilde{H}}(u)|
	\to0 .$
	For nonempty compact convex subsets of \(\R^d\), this last quantity is
	exactly the Hausdorff distance (\Cref{lem:support_excess_hausdorff}). 
	
	The last part of the statement is a consequence of \cite[Theorem~28]{daniilidis2011continuity}.
\end{proof}

\section{Applications to nonsmooth and stochastic optimization}

\label{sec:applications}

This part is dedicated to applications of the general results from \Cref{sec:main_result}. We will use the notion of \emph{Clarke subdifferential}, which is defined for any locally Lipschitz function \(F:\R^p\to\R\) as the set-valued map
\[
\partial^c F(x)
:=
\conv\left\{
v\in\R^p:
\exists x_k\to x,\ x_k\in\operatorname{diff}F,\
\nabla F(x_k)\to v
\right\},
\]
where \(\operatorname{diff}F\) is the differentiability set of \(F\), which
has full Lebesgue measure by Rademacher's theorem.

\subsection{Sample stability of nonsmooth stochastic problems}
We illustrate \Cref{cor:stochastic_ge} through a nonsmooth stochastic problem. Take $\Theta \subset \R^p$ compact and convex, \(S=\R^q\) and consider the minimization problem
\[
\min_{\theta\in\Theta}\; \E_{\xi\sim P}[f(\theta,\xi)],
\]
where \(f:\R^p\times\R^q\to\R\) is jointly locally Lipschitz and bounded below.  For a fixed sample \(s\), a nonsmooth first-order method typically uses a
generalized gradient of the sampled loss \(\theta\mapsto f(\theta,s)\). Here, this suggests using the partial Clarke subdifferential $\partial^c_\theta f(\theta,s)
:=
\partial^c(f(\cdot,s))(\theta).$ However,
this object need not be outer semicontinuous jointly in \((\theta,s)\).  Instead, we may consider the projected Clarke subdifferential. Let $\partial^c f: \R^{p} \times  \R^q \rightrightarrows \R^{p} \times  \R^q$
denote the joint Clarke subdifferential of \(f\), and for
\((\theta,s)\in\R^p\times\R^q\) define the projected map
\[
D_f(\theta,s)
:=
\left\{
v_1\in\R^p:
\exists v_2\in\R^q\text{ such that }(v_1,v_2)\in\partial^c f(\theta,s)
\right\}.
\]
When \(f\) is, for instance, semialgebraic, a chain rule holds
\cite{bolte2021conservative}  for each fixed \(s\): for any absolutely continuous curve
\(x:[0,1]\to\R^p\),  for Lebesgue-almost all $t \in [0,1]$,
\[
\frac{\dd f(x(t),s)}{\dd t}
=
\langle v,\dot{x}(t)\rangle,
\qquad
\text{for all }v\in D_f(x(t),s).
\]
Semismoothness properties also hold \cite{bolte2009tame,davis2022conservative};
for each fixed \(s\), at each $\theta$,
\[
f(z,s)
=
f(\theta,s)+\langle v,z-\theta\rangle+o(\|\theta-z\|),
\qquad
v\in D_f(z,s).
\]
These generalized first-order regularity properties underlie convergence
analyses for nonsmooth nonconvex stochastic subgradient methods toward
generalized critical points
\cite{ermol1998stochastic,davis2020stochastic,bolte2023subgradient}. In the
present setting, these critical points are defined as
\[
Z
:=
\left\{
\vartheta\in\Theta:
0\in\E_{\xi\sim P}[D_f(\vartheta,\xi)]+N_\Theta(\vartheta)
\right\},
\]
where \(N_\Theta\) is the normal cone to the convex domain \(\Theta\). When an approximate distribution \(\widehat P_T\) is used in place of $P$, our convergence
results guarantee stability of the approximate critical points as
\(\widehat P_T\Rightarrow P\).  More precisely, if for each \(T\geq1\) we define
\[
Z_T
:=
\left\{
\vartheta\in\Theta:
0\in\E_{\xi\sim\widehat P_T}[D_f(\vartheta,\xi)]
+N_\Theta(\vartheta)
\right\},
\]
then $\D(Z_T,Z)\xrightarrow[T\to\infty]{}0$
under the conditions of \Cref{cor:stochastic_ge} or the ergodic setting of \Cref{cor:ergodic_sampling}.

\subsection{Smoothing by mollifiers}

A standard method to smooth a nonsmooth function is to average its values under
small perturbations
\cite{ErmolievNorkinWets1995,Chen2012Smoothing,NesterovSpokoiny2017}. A natural question is whether the resulting gradients remain consistent with
the subdifferential of the original function. We show that this follows  from the graphical stability established in \Cref{th:main_theorem}.

Let \(F:\R^p\to\R\) be locally Lipschitz. For \(\alpha>0\), let
\(\rho_\alpha\) be a probability density on \(\R^p\), and denote
\[
\mu_\alpha(\dd s):=\rho_\alpha(s)\dd s.
\]
Assume that \(\mu_\alpha\Rightarrow\delta_0\) as \(\alpha\downarrow0\).
Examples include uniform distributions on balls of radius \(\alpha\) and
centered Gaussian distributions with covariance \(\alpha^2I_p\). The
corresponding smoothed function is
\[
F_\alpha(\theta)
:=
\int_{\R^p}F(\theta-s)\rho_\alpha(s)\dd s.
\]
and the natural first-order object for \(F_\alpha\) is the averaged Clarke subdifferential
\[
H_\alpha(\theta)
:=
\int_{\R^p}\partial^cF(\theta-s)\rho_\alpha(s)\dd s.
\]
To apply \Cref{th:main_theorem}, consider
\[
G(\theta,s):=\partial^cF(\theta-s).
\]
The Clarke subdifferential of a locally Lipschitz function has nonempty compact
convex values and is outer semicontinuous and locally bounded. Consequently,
this choice of \(G\) is measurable, jointly outer semicontinuous, and locally bounded on
\(\Theta\times\R^p\).

It remains to verify the superlinear integrability condition. For compactly
supported kernels, suppose that the supports of \(\mu_\alpha\), for
\(0<\alpha\leq\alpha_0\), are contained in a fixed compact set \(K\). Taking
\(K\) as the sample space, the points \(\theta-s\) range over the compact set
\(\Theta-K\), on which \(\partial^cF\) is uniformly bounded. The condition
then holds with a constant bound $m$.

For kernels with noncompact support, a growth condition is needed. For
example, assume that
\begin{equation}
	\label{eq:poly_growth_F}
	\sup_{v\in\partial^cF(x)}\|v\|
\leq C(1+\|x\|^r)
\end{equation}
for some \(r\geq0\). Since \(\Theta\) is compact, one may take an envelope of
the form \(m(s)=C'(1+\|s\|^r)\). For Gaussian kernels, the required
superlinear integrability follows from their uniformly bounded moments, for
instance by taking \(\Phi(t)=t^{1+\varepsilon}\) with
\(\varepsilon>0\). Such growth conditions are readily checked, for instance \eqref{eq:poly_growth_F} holds for semialgebraic $F$.

Under either of these conditions, and more generally whenever
\Cref{ass:general_assumptions_D} holds, applying
\Cref{th:main_theorem} along any sequence \(\alpha_j\downarrow0\) gives $\D\left(
\graph_\Theta H_\alpha,
\graph_\Theta\partial^cF
\right)
\xrightarrow[\alpha\downarrow0]{}0.$
This graphical convergence yields the usual gradient-consistency statement.
When differentiation under the integral is justified, \(F_\alpha\) is
differentiable and
\[
\nabla F_\alpha(\theta)
=
\int_{\R^p}\nabla F(\theta-s)\rho_\alpha(s)\dd s
\in H_\alpha(\theta),
\]
because
\(\nabla F(\theta-s)\in\partial^cF(\theta-s)\) for Lebesgue-almost every
\(s\). Hence gradient consistency holds:
\[
\sup_{\theta\in\Theta}
\dist\left(
(\theta,\nabla F_\alpha(\theta)),
\graph_\Theta\partial^cF
\right)
\xrightarrow[\alpha\downarrow0]{}0.
\]
\subsection{Solutions of differential inclusions with parameter-dependent laws}

The continuous-time behavior of nonsmooth stochastic algorithms is naturally
described by differential inclusions driven by set-valued mean fields
\cite{benaim2005stochastic,benaim2012perturbations,davis2020stochastic}.
In decision-dependent and performative models, the distribution of the data
may itself depend on the current parameter
\cite{PerdomoZrnicMendlerDunnerHardt2020,DrusvyatskiyXiao2023,
	EnnajiFadiliAttouch2024}. Stability under moving probability laws is therefore
directly relevant: it provides the outer semicontinuity needed to apply the
standard existence theory for such dynamics.

\bigskip

Let \((\mu_\theta)_{\theta\in\R^p}\) be a parameterized family of probability
measures on \(S\), and consider the set-valued mean field
\[
V(\theta)
:=
\int_S G(\theta,s)\dd\mu_\theta(s),
\qquad
G:\R^p\times S\rightrightarrows\R^p.
\]
Assume that \(\theta\mapsto\mu_\theta\) is weakly continuous:
\[
\theta_k\to\theta
\quad\Longrightarrow\quad
\mu_{\theta_k}\Rightarrow\mu_\theta.
\]
Suppose that \(G\) satisfies the regularity conditions of
\Cref{ass:general_assumptions_D} and that its integrability condition holds
locally uniformly in \(\theta\). More precisely, for every compact
\(K\subset\R^p\), there exist \(C_K>0\), a measurable function
\(m_K:S\to\R_+\), and a convex function \(\Phi_K:\R_+\to\R_+\) such that
\[
\sup_{y\in G(\vartheta,s)}\|y\|
\leq C_Km_K(s)
\qquad
\text{for all }(\vartheta,s)\in K\times S,
\]
\[
\lim_{t\to\infty}\frac{\Phi_K(t)}{t}=\infty,
\qquad
\sup_{\vartheta\in K}
\int_S\Phi_K(m_K(s))\dd\mu_\vartheta(s)<\infty.
\]
Let \(\theta_k\to\theta\), and choose a compact set \(K\) containing
\(\theta\) and the sequence \((\theta_k)_{k \in \N}\). For \(\vartheta\in K\), define
\[
H_k(\vartheta)
:=
\int_S G(\vartheta,s)\dd\mu_{\theta_k}(s),
\qquad
H(\vartheta)
:=
\int_S G(\vartheta,s)\dd\mu_\theta(s).
\]
The hypotheses of \Cref{th:main_theorem} hold on \(K\), and hence
\[
\D(\graph_K H_k,\graph_K H)\longrightarrow0.
\]
Using the pointwise characterization from \Cref{lem:graph_convergence_pointwise_characterization} with
\(\vartheta_k=\theta_k\), we obtain
\[
\D\bigl(V(\theta_k),V(\theta)\bigr)
=
\D\bigl(H_k(\theta_k),H(\theta)\bigr)
\longrightarrow0.
\]
Thus \(V\) is outer semicontinuous. Consequently, if \(V\) has nonempty compact convex values and satisfies a
linear-growth bound, the differential inclusion
\begin{equation*}
	\dot\theta(t)\in V(\theta(t))
	\quad\text{for almost every }t\geq0,
	\qquad
	\theta(0)=\theta_0,
\end{equation*}
falls within the standard existence theory, see \cite{aubin2008differential}. In particular, it admits a global 
absolutely continuous solution.

\section*{Acknowledgements}

The author used ChatGPT (OpenAI) as an aid in reviewing mathematical arguments and brainstorming possible counterexamples. All proofs, claims, and examples were independently checked and verified by the author, who assumes full responsibility for the manuscript.

\bibliographystyle{abbrv}
\bibliography{references}

\begin{thebibliography}{10}

\bibitem{infinite}
C.~D. Aliprantis and K.~C. Border.
\newblock {\em Infinite Dimensional Analysis}.
\newblock Springer, Berlin, 2006.
\newblock \url{https://doi.org/10.1007/3-540-29587-9}.

\bibitem{ArtsteinVitale1975}
Z.~Artstein and R.~A. Vitale.
\newblock A strong law of large numbers for random compact sets.
\newblock {\em The Annals of Probability}, 3(5):879--882, 1975.
\newblock \url{https://doi.org/10.1214/aop/1176996275}.

\bibitem{artstein1988approximating}
Z.~Artstein and R.~J.-B. Wets.
\newblock Approximating the integral of a multifunction.
\newblock {\em Journal of Multivariate Analysis}, 24(2):285--308, 1988.
\newblock \url{https://doi.org/10.1016/0047-259X(88)90041-3}.

\bibitem{aubin1987graphical}
J.-P. Aubin.
\newblock Graphical convergence of set-valued maps.
\newblock Technical Report WP-87-083, International Institute for Applied
  Systems Analysis, Laxenburg, Austria, 1987.

\bibitem{AubinFrankowska1990}
J.-P. Aubin and H.~Frankowska.
\newblock {\em Set-Valued Analysis}.
\newblock Systems \& Control: Foundations \& Applications. Birkh{\"a}user,
  Boston, 1990.
\newblock \url{https://doi.org/10.1007/978-0-8176-4848-0}.

\bibitem{aubin2008differential}
J.-P. Aubin and H.~Frankowska.
\newblock Differential inclusions.
\newblock In {\em Set-Valued Analysis}, pages 1--27. Springer, 2008.
\newblock \url{https://doi.org/10.1007/978-0-8176-4848-0_10}.

\bibitem{Aumann1965}
R.~J. Aumann.
\newblock Integrals of set-valued functions.
\newblock {\em Journal of Mathematical Analysis and Applications}, 12(1):1--12,
  1965.
\newblock \url{https://doi.org/10.1016/0022-247X(65)90049-1}.

\bibitem{ben2009robust}
A.~Ben-Tal, L.~El~Ghaoui, and A.~Nemirovski.
\newblock {\em Robust Optimization}.
\newblock Princeton University Press, Princeton, NJ, 2009.
\newblock \url{https://doi.org/10.1515/9781400831050}.

\bibitem{benaim2005stochastic}
M.~Bena{\"\i}m, J.~Hofbauer, and S.~Sorin.
\newblock Stochastic approximations and differential inclusions.
\newblock {\em SIAM Journal on Control and Optimization}, 44(1):328--348, 2005.
\newblock \url{https://doi.org/10.1137/S0363012904439301}.

\bibitem{benaim2012perturbations}
M.~Bena{\"\i}m, J.~Hofbauer, and S.~Sorin.
\newblock Perturbations of set-valued dynamical systems, with applications to
  game theory.
\newblock {\em Dynamic Games and Applications}, 2(2):195--205, 2012.
\newblock \url{https://doi.org/10.1007/s13235-012-0040-0}.

\bibitem{Billingsley1999}
P.~Billingsley.
\newblock {\em Convergence of Probability Measures}.
\newblock Wiley, July 1999.
\newblock \url{https://doi.org/10.1002/9780470316962}.

\bibitem{bolte2009tame}
J.~Bolte, A.~Daniilidis, and A.~Lewis.
\newblock Tame functions are semismooth.
\newblock {\em Mathematical Programming}, 117(1):5--19, 2009.
\newblock \url{https://doi.org/10.1007/s10107-007-0166-9}.

\bibitem{bolte2023subgradient}
J.~Bolte, T.~Le, and E.~Pauwels.
\newblock Subgradient sampling for nonsmooth nonconvex minimization.
\newblock {\em SIAM Journal on Optimization}, 33(4):2542--2569, 2023.
\newblock \url{https://doi.org/10.1137/22M1479178}.

\bibitem{bolte2021conservative}
J.~Bolte and E.~Pauwels.
\newblock Conservative set-valued fields, automatic differentiation, stochastic
  gradient methods and deep learning.
\newblock {\em Mathematical Programming}, 188(1):19--51, jul 2021.
\newblock \url{https://doi.org/10.1007/s10107-020-01501-5}.

\bibitem{CastaingValadier1977}
C.~Castaing and M.~Valadier.
\newblock {\em Convex Analysis and Measurable Multifunctions}, volume 580 of
  {\em Lecture Notes in Mathematics}.
\newblock Springer, Berlin, 1977.
\newblock \url{https://doi.org/10.1007/BFb0087685}.

\bibitem{Chen2012Smoothing}
X.~Chen.
\newblock Smoothing methods for nonsmooth, nonconvex minimization.
\newblock {\em Mathematical Programming}, 134(1):71--99, jun 2012.
\newblock \url{https://doi.org/10.1007/s10107-012-0569-0}.

\bibitem{daniilidis2011continuity}
A.~Daniilidis and J.~C.~H. Pang.
\newblock Continuity and differentiability of set-valued maps revisited in the
  light of tame geometry.
\newblock {\em Journal of the London Mathematical Society}, 83(3):637--658,
  2011.
\newblock \url{https://doi.org/10.1112/jlms/jdq084}.

\bibitem{davis2022conservative}
D.~Davis and D.~Drusvyatskiy.
\newblock Conservative and semismooth derivatives are equivalent for
  semialgebraic maps.
\newblock {\em Set-Valued and Variational Analysis}, 30(2):453--463, 2022.
\newblock \url{https://doi.org/10.1007/s11228-021-00594-0}.

\bibitem{DavisDrusvyatskiy2022}
D.~Davis and D.~Drusvyatskiy.
\newblock Graphical convergence of subgradients in nonconvex optimization and
  learning.
\newblock {\em Mathematics of Operations Research}, 47(1):209--231, 2022.
\newblock \url{https://doi.org/10.1287/moor.2021.1126}.

\bibitem{davis2020stochastic}
D.~Davis, D.~Drusvyatskiy, S.~Kakade, and J.~D. Lee.
\newblock Stochastic subgradient method converges on tame functions.
\newblock {\em Foundations of Computational Mathematics}, 20(1):119--154, 2020.
\newblock \url{https://doi.org/10.1007/s10208-018-09409-5}.

\bibitem{Debreu1967}
G.~Debreu.
\newblock Integration of correspondences.
\newblock In {\em Proceedings of the Fifth Berkeley Symposium on Mathematical
  Statistics and Probability, Volume 2: Contributions to Probability Theory,
  Part 1}, pages 351--372, Berkeley, 1967. University of California Press.

\bibitem{dellacherie2011probabilities}
C.~Dellacherie and P.-A. Meyer.
\newblock {\em Probabilities and Potential}.
\newblock North-Holland, Amsterdam, 1978.

\bibitem{DrusvyatskiyXiao2023}
D.~Drusvyatskiy and L.~Xiao.
\newblock Stochastic optimization with decision-dependent distributions.
\newblock {\em Mathematics of Operations Research}, 48(2):954--998, 2023.
\newblock \url{https://doi.org/10.1287/moor.2022.1287}.

\bibitem{DuMordatch2019}
Y.~Du and I.~Mordatch.
\newblock Implicit generation and modeling with energy based models.
\newblock In H.~Wallach, H.~Larochelle, A.~Beygelzimer, F.~d'Alch{\'e} Buc,
  E.~Fox, and R.~Garnett, editors, {\em Advances in Neural Information
  Processing Systems}, volume~32. Curran Associates, Inc., 2019.

\bibitem{DuchiBartlettWainwright2012}
J.~C. Duchi, P.~L. Bartlett, and M.~J. Wainwright.
\newblock Randomized smoothing for stochastic optimization.
\newblock {\em SIAM Journal on Optimization}, 22(2):674--701, jan 2012.
\newblock \url{https://doi.org/10.1137/110831659}.

\bibitem{EnnajiFadiliAttouch2024}
H.~Ennaji, J.~M. Fadili, and H.~Attouch.
\newblock {Stochastic Monotone Inclusion with Closed Loop Distributions}.
\newblock {\em {Evolution Equations and Control Theory}}, 17:140--172, 2026.
\newblock \url{https://doi.org/10.3934/eect.2025022}.

\bibitem{ermol1998stochastic}
Y.~M. Ermol'ev and V.~I. Norkin.
\newblock Stochastic generalized gradient method for nonconvex nonsmooth
  stochastic optimization.
\newblock {\em Cybernetics and Systems Analysis}, 34(2):196--215, 1998.
\newblock \url{https://doi.org/10.1007/BF02742069}.

\bibitem{ErmolievNorkinWets1995}
Y.~M. Ermoliev, V.~I. Norkin, and R.~J.-B. Wets.
\newblock The minimization of semicontinuous functions: Mollifier subgradients.
\newblock {\em SIAM Journal on Control and Optimization}, 33(1):149--167, jan
  1995.
\newblock \url{https://doi.org/10.1137/S0363012992238369}.

\bibitem{Even2023}
M.~Even.
\newblock Stochastic gradient descent under markovian sampling schemes.
\newblock In A.~Krause, E.~Brunskill, K.~Cho, B.~Engelhardt, S.~Sabato, and
  J.~Scarlett, editors, {\em Proceedings of the 40th International Conference
  on Machine Learning}, volume 202 of {\em Proceedings of Machine Learning
  Research}, pages 9412--9439. PMLR, 2023.

\bibitem{HiaiUmegaki1977}
F.~Hiai and H.~Umegaki.
\newblock Integrals, conditional expectations, and martingales of multivalued
  functions.
\newblock {\em Journal of Multivariate Analysis}, 7(1):149--182, 1977.
\newblock \url{https://doi.org/10.1016/0047-259X(77)90037-X}.

\bibitem{LiuRomischXu2014}
Y.~Liu, W.~R{\"o}misch, and H.~Xu.
\newblock Quantitative stability analysis of stochastic generalized equations.
\newblock {\em SIAM Journal on Optimization}, 24(1):467--497, 2014.
\newblock \url{https://doi.org/10.1137/120880434}.

\bibitem{MeynTweedie2009}
S.~P. Meyn and R.~L. Tweedie.
\newblock {\em Markov Chains and Stochastic Stability}.
\newblock Cambridge University Press, Cambridge, 2 edition, 2009.
\newblock \url{https://doi.org/10.1017/CBO9780511626630}.

\bibitem{Molchanov2005}
I.~Molchanov.
\newblock {\em Theory of Random Sets}.
\newblock Probability and Its Applications. Springer, London, 2005.
\newblock \url{https://doi.org/10.1007/1-84628-150-4}.

\bibitem{NesterovSpokoiny2017}
Y.~Nesterov and V.~Spokoiny.
\newblock Random gradient-free minimization of convex functions.
\newblock {\em Foundations of Computational Mathematics}, 17(2):527--566, apr
  2017.
\newblock \url{https://doi.org/10.1007/s10208-015-9296-2}.

\bibitem{NorkinWets2013}
V.~I. Norkin and R.~J.-B. Wets.
\newblock On a strong graphical law of large numbers for random semicontinuous
  mappings.
\newblock {\em Vestnik Sankt-Peterburgskogo Universiteta, Seriya 10}, pages
  102--111, 2013.
\newblock No. 3.

\bibitem{PerdomoZrnicMendlerDunnerHardt2020}
J.~Perdomo, T.~Zrnic, C.~Mendler-D{\"u}nner, and M.~Hardt.
\newblock Performative prediction.
\newblock In H.~Daum{\'e}~III and A.~Singh, editors, {\em Proceedings of the
  37th International Conference on Machine Learning}, volume 119 of {\em
  Proceedings of Machine Learning Research}, pages 7599--7609. PMLR, 2020.

\bibitem{RamNedicVeeravalli2009}
S.~S. Ram, A.~Nedi{\'c}, and V.~V. Veeravalli.
\newblock Incremental stochastic subgradient algorithms for convex
  optimization.
\newblock {\em SIAM Journal on Optimization}, 20(2):691--717, 2009.
\newblock \url{https://doi.org/10.1137/080726380}.

\bibitem{RockafellarWets1998}
R.~T. Rockafellar and R.~J.-B. Wets.
\newblock {\em Variational Analysis}, volume 317 of {\em Grundlehren der
  mathematischen Wissenschaften}.
\newblock Springer, Berlin, 1998.
\newblock \url{https://doi.org/10.1007/978-3-642-02431-3}.

\bibitem{Ruan2025}
F.~Ruan.
\newblock On the uniform convergence of subdifferentials in stochastic
  optimization and learning.
\newblock {\em Mathematics of Operations Research}, 2025.
\newblock Articles in Advance. \url{https://doi.org/10.1287/moor.2024.0533}.

\bibitem{Salim2023}
A.~Salim.
\newblock A strong law of large numbers for random monotone operators.
\newblock {\em Set-Valued and Variational Analysis}, 31(4):38, nov 2023.
\newblock \url{https://doi.org/10.1007/s11228-023-00701-3}.

\bibitem{ShapiroXu2007}
A.~Shapiro and H.~Xu.
\newblock Uniform laws of large numbers for set-valued mappings and
  subdifferentials of random functions.
\newblock {\em Journal of Mathematical Analysis and Applications},
  325(2):1390--1399, 2007.
\newblock \url{https://doi.org/10.1016/j.jmaa.2006.02.078}.

\bibitem{TianRoyset2026}
L.~Tian and J.~O. Royset.
\newblock Failure of uniform laws of large numbers for subdifferentials and
  beyond, 2026.
\newblock \url{https://doi.org/10.48550/arXiv.2511.16568}.

\bibitem{XieLuZhuWu2016}
J.~Xie, Y.~Lu, S.-C. Zhu, and Y.~Wu.
\newblock A theory of generative {ConvNet}.
\newblock In M.~F. Balcan and K.~Q. Weinberger, editors, {\em Proceedings of
  the 33rd International Conference on Machine Learning}, volume~48 of {\em
  Proceedings of Machine Learning Research}, pages 2635--2644, New York, New
  York, USA, 20--22 Jun 2016. PMLR.

\bibitem{ZHANG1994355fubini}
D.~Zhang and C.~Guo.
\newblock Fubini theorem for {F}-valued integrals.
\newblock {\em Fuzzy Sets and Systems}, 62(3):355--358, 1994.
\newblock \url{https://doi.org/10.1016/0165-0114(94)90120-1}.

\end{thebibliography}

\end{document}